\newtheorem*{maintheorem}{The Main Theorem}
\newtheorem{theorem}{Theorem}
\newtheorem{corollary}[theorem]{Corollary}
\newtheorem{lemma}[theorem]{Lemma}
\newtheorem{proposition}[theorem]{Proposition}
\newtheorem{definition}[theorem]{Definition}
\newtheorem{remark}[theorem]{Remark}
\newtheorem{observation}[theorem]{Observation}
\begin{document}

\title{Group algebra criteria for vanishing of cohomology}
\author{Uri Bader \& Piotr W. Nowak}

\maketitle

\begin{abstract}
Given a group satisfying sufficient finiteness properties, we discuss a group algebra criterion for vanishing of all its cohomology groups with unitary coefficients in a certain degree.
\end{abstract}

\section{Introduction}

In this manuscript we consider the group property of vanishing of all cohomology groups with unitary coefficients in a certain degree.
Recall that for finitely generated groups the vanishing of all {\em first} cohomology with unitary coefficients 
is equivalent to Kazhdan's property $(T)$.
Note that property $(T)$ is related to graph expansion properties and has numerous applications.
Recently, the vanishing of all {\em second} cohomology with unitary coefficients was brought to the front in \cite{chifre-etal},
where it was observed that for finitely presented groups this property implies Frobenius-stability.
Vanishing of all cohomology groups with unitary coefficients in {\em higher} degrees also gained recent interests, as it is related to the study of higher-dimensional expanders, see e.g. \cite{lubotzky}.
Spectral criteria for vanishing of all cohomology groups with unitary coefficients in a certain degree were given by Ballmann and Swiatkowski in
\cite{Ballmann-Swiatkowski}, following Garland \cite{Garland}, see also \cite{dymara-janushkevich, oppenheim1, oppenheim2} and \cite{oppenheim-gr}.
Here we are concerned with the question whether, given a group satisfying sufficient finiteness properties, the vanishing of all its 
cohomology groups with unitary coefficients in a certain degree could be 
detected in the level of the group algebra.

A source of inspiration for us is the work of Narutaka Ozawa,
who settled the above question in the first degree. Indeed, he showed in \cite{ozawa} that a finitely generated group $\Gamma$ has peoperty $(T)$ if and only if the 
equation $$\Delta^2=\epsilon\Delta+\sum_{i=1}^m x_i^*x_i$$
has a solution in the group ring $\mathbb{Q}\Gamma$ for some positive rational $\epsilon$, where $\Delta$ denotes the 
Laplacian in $\mathbb{Q}\Gamma$. This allowed computer-assisted methods to be used 
to prove property $(T)$ for some groups, including some classical ones such as $\operatorname{SL}_n(\mathbb{Z})$ for $n=3,4,5$, see \cite{netzer-thom,fujiwara-kabaya,kaluba-nowak},
as well as new ones such as $\operatorname{Aut}(F_n)$ for $n \ge 5$, see 
\cite{ kaluba-nowak-ozawa,kaluba-kielak-nowak}.
Our current contribution is an analogue of Ozawa's characterization 
in higher degrees. 
Before stating it, we recall the following standard fact.

\begin{proposition} \label{prop:D_n}
Let $\Gamma$ be a group.
Assume $\Gamma$ is acting with finite stabilizers by automorphisms on a contractible simplicial complex $X$.
Let $N$ be a natural such that for every $n\leq N+1$, the $n$'th skeleton of $X$ has finitely many $\Gamma$-orbits.
Denote this number of orbits by $k_n$.
Then for $n\leq N$ there exist group algebra matrices $D_n\in M_{k_n\times k_{n+1}}(\mathbb{Q}\Gamma)$
such that $D_nD_{n-1}=0$ and for every $\mathbb{Q}\Gamma$ module $V$, the cohomology groups $H^n(\Gamma,V)$ are isomorphic to the cohomology groups of the complex
\[ \cdots \to V^{k_{n-1}} \overset{D_{n-1}}{\longrightarrow} V^{k_n} \overset{D_{n}}{\longrightarrow} V^{k_{n+1}} \to \cdots \]
\end{proposition}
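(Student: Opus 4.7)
The plan is to realize the complex $(V^{k_n},D_n)$ as $\mathrm{Hom}_{\mathbb{Q}\Gamma}(-,V)$ applied to a projective resolution of the trivial $\mathbb{Q}\Gamma$-module $\mathbb{Q}$, where the resolution arises from the cellular chain complex of $X$.

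First, I would form the rational cellular chain complex $C_* = C_*(X;\mathbb{Q})$. Since $X$ is contractible, the augmented sequence $C_* \to \mathbb{Q} \to 0$ is exact. Choose representatives $\sigma_i^{(n)}$ ($i = 1,\ldots,k_n$) of the $\Gamma$-orbits of $n$-simplices. Then one has the standard decomposition
$$
C_n \;\cong\; \bigoplus_{i=1}^{k_n} \mathbb{Q}\Gamma \otimes_{\mathbb{Q}\Gamma_{\sigma_i^{(n)}}} \mathbb{Q}^{\varepsilon_i^{(n)}},
$$
where $\varepsilon_i^{(n)}$ is the one-dimensional representation of the stabilizer recording its action on the orientation of $\sigma_i^{(n)}$. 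Since each $\Gamma_{\sigma_i^{(n)}}$ is finite and its order invertible in $\mathbb{Q}$, Maschke's theorem supplies the averaging idempotent
$$
e_i^{(n)} \;=\; \frac{1}{|\Gamma_{\sigma_i^{(n)}}|} \sum_{g\in\Gamma_{\sigma_i^{(n)}}} \varepsilon_i^{(n)}(g)\,g \;\in\; \mathbb{Q}\Gamma,
$$
exhibiting the $i$-th summand as the direct summand $\mathbb{Q}\Gamma\cdot e_i^{(n)}$ of $\mathbb{Q}\Gamma$. Hence $C_n$ is a $\mathbb{Q}\Gamma$-summand of the free module $(\mathbb{Q}\Gamma)^{k_n}$, and $C_*$ is a projective resolution of $\mathbb{Q}$.

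Next, I would record the boundary maps as matrices. Expanding
$$
\partial_{n+1}\bigl(e_j^{(n+1)}\bigr) \;=\; \sum_{i=1}^{k_n} (D_n)_{ij}\, e_i^{(n)} \;\in\; C_n
$$
defines entries $(D_n)_{ij} \in \mathbb{Q}\Gamma$ and thus a matrix $D_n \in M_{k_n\times k_{n+1}}(\mathbb{Q}\Gamma)$; the relation $\partial\circ\partial = 0$ translates directly into the matrix identity $D_nD_{n-1} = 0$ (in the appropriate order for the chosen convention). For any $\mathbb{Q}\Gamma$-module $V$, the group cohomology is $H^n(\Gamma,V) = \mathrm{Ext}^n_{\mathbb{Q}\Gamma}(\mathbb{Q},V) = H^n(\mathrm{Hom}_{\mathbb{Q}\Gamma}(C_*,V))$, and the tautological identification $\mathrm{Hom}_{\mathbb{Q}\Gamma}((\mathbb{Q}\Gamma)^{k_n},V) \cong V^{k_n}$ turns the dual boundary into matrix multiplication by $D_n$, producing the stated complex.

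The main subtlety, and the step requiring the most care, lies in the last identification: the natural cochain module is $\mathrm{Hom}_{\mathbb{Q}\Gamma}(C_n,V) \cong \bigoplus_i e_i^{(n)} V$, which is a proper subspace of $V^{k_n}$ whenever some stabilizer is nontrivial. The matrices $D_n$ produced by the construction automatically satisfy the idempotent compatibility $e^{(n)} D_n = D_n e^{(n+1)} = D_n$, where $e^{(n)} = \mathrm{diag}(e_1^{(n)},\ldots,e_{k_n}^{(n)})$. Under this compatibility the complex $(V^{k_n},D_n)$ decomposes along the splitting $V^{k_n} = e^{(n)} V^{k_n} \oplus (1-e^{(n)}) V^{k_n}$ into the idempotent subcomplex, whose cohomology is $H^n(\Gamma,V)$ by the above, and a complementary piece; the key remaining task is to verify that this complementary piece contributes trivially, so that the full complex indeed recovers $H^n(\Gamma,V)$. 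This is the bookkeeping step I expect to be the main obstacle.
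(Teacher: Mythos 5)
Your strategy---cellular chains of $X$ as a projective resolution of $\mathbb{Q}$ over $\mathbb{Q}\Gamma$ via the Maschke idempotents of the finite stabilizers, then computing $\mathrm{Ext}$---is a reasonable route, but the step you defer at the end is a genuine gap, not bookkeeping, and the way you propose to close it would fail. You correctly note that $\mathrm{Hom}_{\mathbb{Q}\Gamma}(C_n,V)\cong\bigoplus_i e_i^{(n)}V$ is in general a proper subspace of $V^{k_n}$. To make sense of the matrix entries at all (they are only determined modulo the annihilators of the idempotents, and the exact identity $D_nD_{n-1}=0$ in $M(\mathbb{Q}\Gamma)$ requires fixing this ambiguity) you are pushed to the normalization $(D_n)_{ij}=e_j^{(n+1)}(D_n)_{ij}e_i^{(n)}$, which is exactly the compatibility $e^{(n+1)}$--$D_n$--$e^{(n)}$ you state. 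But under that compatibility the induced differential on $V^{k_\bullet}$ annihilates $(1-e^{(n)})V^{k_n}$ and has image inside $e^{(n+1)}V^{k_{n+1}}$, so the full complex splits as the idempotent subcomplex (which does compute $H^\bullet(\Gamma,V)$) direct sum the complex $\bigl((1-e^{(\bullet)})V^{k_\bullet},\,0\bigr)$ with \emph{zero} differentials. The latter contributes $\bigoplus_i(1-e_i^{(n)})V$ to the $n$-th cohomology, which is nonzero whenever some $e_i^{(n)}$ does not act as the identity on $V$. So your complex computes $H^n(\Gamma,V)\oplus\bigoplus_i(1-e_i^{(n)})V$ rather than $H^n(\Gamma,V)$; the complementary piece does not contribute trivially. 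A minimal illustration of the phenomenon: $\Gamma=\mathbb{Z}/2$ acting (trivially) on a single point and by the sign character on $V=\mathbb{Q}$ gives the complex $0\to V\to 0$, whose $H^0$ is $V$, while $H^0(\Gamma,V)=V^\Gamma=0$.

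For comparison, the paper takes a different and more pedestrian route: it starts from the complex of alternating $\Gamma$-equivariant maps $X^{(n)}\to V$, quotes as known (Theorem~\ref{thm:groupcoho}) that this computes $H^\bullet(\Gamma,V)$ when $X$ is contractible with finite stabilizers, and then identifies equivariant cochains with arbitrary maps on a fundamental domain $Y^{(n)}$ for the action on \emph{ordered} simplices, reading off $D_n$ from the face maps. The stabilizer issue you ran into is absorbed there into the claim that this restriction map is a linear isomorphism onto all of $V^{k_n}$, i.e., that the indexing by orbits of ordered simplices carries no residual stabilizer/orientation constraint. To repair your argument you would have to either keep the cochain groups $\bigoplus_i e_i^{(n)}V$ (which are not of the form $V^{k_n}$), or supply an argument that for the chosen orbit representatives the idempotents $e_i^{(n)}$ act as the identity on every coefficient module in sight; as written, your proof establishes the proposition only under that extra hypothesis.
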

We give a proof of the above proposition in Section \ref{sec:main}.
Let $I\in M_{k_n}(\mathbb{Q}\Gamma)$ denote the identity matrix and for $x\in M_{k_n}(\mathbb{Q}\Gamma)$, let 
$x^*$ denote the matrix in $M_{k_n}(\mathbb{Q}\Gamma)$ obtained by transposing the matrix $x$ and inverting the 
group elements in its entries.
The following is the main theorem of this paper.

\begin{maintheorem} 
Let $\Gamma$ be a group.
Assume $\Gamma$ is acting with finite stabilizers by automorphisms on a contractible simplicial complex $X$.
Let $N$ be a natural such that for every $n\leq N+1$, the $n$'th skeleton of $X$ has finitely many $\Gamma$-orbits.
Denote this number of orbits by $k_n$.
For $n\leq N$ we let $D_n\in M_{k_n\times k_{n+1}}(\mathbb{Q}\Gamma)$ be the matrix over the rational group algebra given in
Proposition~\ref{prop:D_n}
and we set 
\[ \Delta_n=D_n^*D_n+D_{n-1}D_{n-1}^* \in M_{k_n}(\mathbb{Q}\Gamma). \]
Then, for a fixed $1\leq n\leq N$, the following are equivalent:
\begin{enumerate}
\item For every unitary representation $\rho$ of $\Gamma$, $H^n(\Gamma,\rho)=0$ and the topology of $H^{n+1}(\Gamma,\rho)$
is Hausdorff (that is $H^{n+1}(\Gamma,\rho)$ is reduced, see \S\ref{sec:hilbertchain} for the exact definition).
\item There exist a rational $\epsilon>0$ and elements $x_1,\ldots,x_m\in M_{k_n}(\mathbb{Q}\Gamma)$
such that 
\[ \Delta_n=\epsilon I+\sum_{i=1}^m x_i^*x_i. \]
\end{enumerate}
\end{maintheorem}

The result follows easily by combining the well understood theories of $*$-algebras, which we will recall in \S\ref{sec:algebras},
and of chain complexes of Hilbert spaces, which we will recall in \S\ref{sec:hilbertchain}.
We will discuss how to obtain The Main Theorem from these theories in \S\ref{sec:main},
giving its actual proof in \S\ref{subsec:proof}, after recalling the proof of Proposition~\ref{prop:D_n} in \S\ref{subsec:D_n}.
Finally, we will compare our result with \cite{ozawa} and discuss related characterizations of the property that $H^n(\Gamma,\rho)$ is reduced 
(that is, Hausdorff) for every unitary representation $\rho$ of $\Gamma$
in \S\ref{subsec:remarks}.

\section*{Acknowledgment}

This paper grew out of our discussion of a question by Alex Lubotzky.
We wish to thank Alex for asking us this question and for his interest in this work.
We thank Jan Dymara and Izhar Oppenheim for reading a first draft of this manuscript and sharing their insights with us,
and we thank Marek Kaluba for holding some computer experiments to our request. We also thank the referee for comments and corrections.
Finally, we thank the Banach center in Warsaw for hosting us during the semester on Analytic Group Theory in spring 2019.

UB is supported by the
ISF Moked 713510 grant number 2919/19.
PN is partially supported by the European Research Council (ERC) under the European
Union's Horizon 2020 research and innovation programme (grant agreement no. 677120-INDEX).
This work was partially supported by the grant 346300 for IMPAN from the Simons Foundation and the matching 2015-2019 Polish MNiSW fund

\section{Preliminaries on $*$-algebras} \label{sec:algebras}

In this section we review some basic facts in the theory of $*$-algebras.
By a $*$-algebra we mean a unital algebra over the field of rational numbers $\mathbb{Q}$ which is endowed with an involution, denoted $x\mapsto x^*$, which is anti-automorphic, i.e it is $\mathbb{Q}$-linear and satisfying $(xy)^*=y^*x^*$. 
All algebras considered here are unital and all algebra morphisms are morphisms of unital algebras.
When a scalar is considered as an element in an algebra, it is regarded as applied to the algebra unit.

\subsection{Archimedean algebras, following Cimpri\v{c}}

In this subsection we fix a $*$-algebra $A$.
We denote 
\[ A_h=\{x\in A\mid x=x^*\} \]
and 
\[ A_+=\left\{\sum_{i=1}^n x_i^*x_i \mid n\in \mathbb{N},~x_1,\ldots,x_n \in A\right\}. \]
Note that $A_h<A$ is a subvector space and $A_+ \subset A_h$ has the following properties:
\begin{align*}
x,y \in A_+ & \quad \Rightarrow \quad x+y \in A_+, \\
x\in A,~y\in A_+ & \quad \Rightarrow  \quad x^*yx \in A_+, \\
\alpha \in \mathbb{Q}_+, ~x\in A_+ & \quad \Rightarrow \quad \alpha x\in A_+. 
\end{align*}
The first two properties are immediate and the third follows as every positive rational is a sum of squares of rationals,
indeed $p/q=\sum_{i=1}^{pq} 1/q^2$.

We endow $A$ with the partial order: 
\[ x\leq y \quad \Longleftrightarrow \quad y-x \in A_+ \]
and, considering $\mathbb{Q}<A$, we define for $x\in A$,
\[ \|x\|=\sqrt{\inf \{\alpha\in \mathbb{Q}_+\mid x^*x \leq \alpha\}} \in [0,\infty]. \]
Here we use the conventions $\inf \emptyset=\infty$ and $\sqrt{\infty}=\infty$.
If $\|x\|<\infty$ (that is, there exists $\alpha\in \mathbb{Q}_+$ such that $\|x\|\leq \alpha$), we say that $x$ is bounded. 
We denote the collection of bounded elements in $A$ by $A_b$.
If $\|x\|=0$ we say that $x$ is infinitesimal.
We denote the collection of infinitesimal elements in $A$ by $A_i$.

\begin{theorem}[{Cimpri\v{c}, see \cite[Theorem 3.2, Corollary 3.3]{cimpric}}] \label{thm:cimpric}
Assume $-1\notin A_+$. Then for $x,y\in A_b$ and $\alpha\in \mathbb{Q}$ we have
\begin{enumerate}
\item $\|1\|=1$,
\item $\|\alpha x\|=|\alpha|\|x\|$,
\item $\|x\|=\|x^*\|$,
\item $\|x+y\|\leq \|x\|+\|y\|$,
\item $\|xy\|\leq \|x\|\|y\|$,
\item $\|x^*x\|=\|x\|^2$,
\item $\|x^*x\| \leq \|x^*x+y^*y\|$.
\end{enumerate}
In particular, $A_b<A$ is a $*$-subalgebra, $A_i$ is a two sided $*$-ideal in $A_b$,
$\|\cdot\|$ is a seminorm on $A_b$ which descends to a norm on $A_b/A_i$ and the associated completion of $A_b/A_i$ is a real $C^*$-algebra.
\end{theorem}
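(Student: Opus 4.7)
The plan is to combine direct manipulation of the positive cone $A_+$ with a state-theoretic description of $\|\cdot\|$ made available by the hypothesis $-1 \notin A_+$. The key conceptual point is that this hypothesis forces $A_+$ to be a proper convex cone in $A_h$; by a Hahn--Banach / Zorn's lemma argument one then obtains a separating family of $\mathbb{Q}$-linear functionals $\phi : A \to \mathbb{R}$ (``states'') with $\phi(1) = 1$ and $\phi(A_+) \subseteq \mathbb{R}_{\geq 0}$. These states interplay with the algebraic definition of $\|\cdot\|$ via the identity $\|x\|^2 = \sup_\phi \phi(x^*x)$, reducing everything to a familiar Hilbertian setting through the GNS construction.

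Properties (1), (2), (5), and (7) follow directly from the three defining properties of $A_+$ listed in the preceding discussion. For instance, $\|1\|=1$ is immediate; homogeneity (2) uses $(\alpha x)^*(\alpha x) = \alpha^2 x^*x$; submultiplicativity (5) follows from $(xy)^*(xy) = y^*(x^*x)y \leq \alpha\, y^*y \leq \alpha\beta$ whenever $x^*x \leq \alpha$ and $y^*y \leq \beta$, using stability of $A_+$ under $c \mapsto c^*(\cdot)c$; and (7) follows because $y^*y \in A_+$ gives $x^*x \leq x^*x + y^*y$. Property (3), $\|x^*\| = \|x\|$, is the first that is most naturally deduced via states rather than by purely formal manipulation.

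The triangle inequality (4) and the $C^*$-identity (6) are the heart of the matter. For (4), the polarization $(x - \lambda y)^*(x - \lambda y) \in A_+$ for $\lambda \in \mathbb{Q}$ yields a Cauchy--Schwarz-type bound on the cross term in $(x+y)^*(x+y) = x^*x + y^*y + (x^*y + y^*x)$. For (6), the direction $\|x^*x\| \leq \|x\|^2$ is immediate from (3) and (5); but the reverse inequality $\|x\|^2 \leq \|x^*x\|$ cannot be obtained by purely formal manipulations in $A_+$, since one would need to take ``square roots'' of positive elements, an operation unavailable in the abstract algebraic setting. Here the state-theoretic description is essential: via GNS, each state produces a $*$-representation on a Hilbert space where the genuine $C^*$-identity holds, and passing to the supremum over states recovers (6).

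The main obstacle is therefore establishing the existence of enough states to separate $A_+$ from its complement in $A_h$. This is where the assumption $-1 \notin A_+$ enters crucially: it is precisely the condition ensuring that $A_+$ is a proper convex cone, so that Zorn's lemma produces a maximal proper cone extending $A_+$, whose associated positive functional is a state, and enough such states exist to realize the supremum formula for $\|\cdot\|$. Once a separating family of states is available, the seven norm properties transfer from the operator-algebraic setting, and the ``in particular'' statements (that $A_b < A$ is a $*$-subalgebra, $A_i \subset A_b$ a two-sided $*$-ideal, and that the completion of $A_b/A_i$ is a real $C^*$-algebra) follow routinely by standard completion arguments for normed $*$-algebras satisfying the $C^*$-identity.
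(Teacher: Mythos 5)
Your outline takes a genuinely different route from the paper (states, GNS, and a supremum formula for the norm, versus the paper's purely order-theoretic manipulations), but as written it has a real gap at its load-bearing step. You assert that $-1\notin A_+$ plus a Hahn--Banach/Zorn argument yields a family of states with $\|x\|^2=\sup_\phi\phi(x^*x)$, and that everything then ``transfers from the operator-algebraic setting.'' That supremum formula is not routine: the easy direction is $\sup_\phi\phi(x^*x)\leq\|x\|^2$, but the reverse requires separating an element $\beta-x^*x\notin A_+$ from the cone $A_+$ by a state, which needs (i) that $1$ is an order unit on $(A_b)_h$ --- i.e.\ that $x^*x\leq\alpha$ forces $-\alpha\leq x^*x\leq \alpha$ and more generally two-sided bounds on self-adjoint bounded elements --- and (ii) an Archimedean-closure/Riesz-extension argument. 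Point (i) is exactly the content of the paper's Lemma~\ref{lem:cimpric} ($z\in A_h$, $z^2\leq\alpha^2$ implies $z\leq\alpha$, via the identity $2\alpha(\alpha-z)=(\alpha-z)^2+(\alpha^2-z^2)$), which you never state or prove; without it, neither the existence of states beyond the trivial extension nor your Cauchy--Schwarz step for (4) (converting $(x^*y+y^*x)^2\leq 4\alpha^2\beta^2$ into $x^*y+y^*x\leq 2\alpha\beta$) goes through. There is also a circularity risk: the standard proofs of the separating-family-of-states statement (Kadison--Dubois/Cimpri\v{c}'s representation theorem) are usually derived \emph{from} the norm properties you are trying to establish.

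Your diagnosis of where the difficulty lies is also off in one place: you claim the inequality $\|x\|^2\leq\|x^*x\|$ in (6) ``cannot be obtained by purely formal manipulations'' because one would need square roots. The paper obtains it in two lines: if $\|x^*x\|<\alpha$, then $(x^*x)^2\leq\beta^2$ for some rational $\beta<\alpha$, so Lemma~\ref{lem:cimpric} gives $x^*x\leq\beta$, whence $\|x\|^2\leq\beta<\alpha$ directly from the definition of $\|\cdot\|$. The lemma \emph{is} the formal square-root device, and with it the entire theorem --- including (3), via $\left(\alpha/2\right)^2-\left(\alpha/2-xx^*\right)^2=x(\alpha-x^*x)x^*\geq 0$, and (4), via an explicit sum-of-squares identity for $4\alpha^2\beta^2-(x^*y+y^*x)^2$ --- is proved by elementary cone manipulations with no states or GNS at all. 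I would recommend either isolating and proving that lemma and then following the algebraic route, or, if you want to keep the state-theoretic route, supplying a complete proof of the separation/supremum formula that does not presuppose the conclusion.
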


As the discussion in \cite{cimpric} is in a slightly more general setting than ours, while the proof of the theorem is rather elementary, we will reproduce the proof 
for the reader's convenience.
We first state a useful lemma.

\begin{lemma}[cf. {\cite[Lemma~3.1]{cimpric}}] \label{lem:cimpric}
For $x\in A_h$ and $\alpha \in \mathbb{Q}_+$, 
$x^2\leq \alpha^2$ implies $x\leq \alpha$.
In particular,
$\|x\|< \alpha$ implies $x < \alpha$.
\end{lemma}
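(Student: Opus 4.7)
The plan is to establish the first statement through a direct algebraic identity, then deduce the second statement by rational approximation. The key observation is that we can factor $\alpha - x$ using a sum-of-squares expression. Specifically, since $\alpha \in \mathbb{Q}$ is central in $A$, a direct computation yields the identity
\[ 2\alpha(\alpha - x) = (\alpha - x)^2 + (\alpha^2 - x^2). \]
(Expand the right hand side: $(\alpha-x)^2 = \alpha^2 - 2\alpha x + x^2$, which when added to $\alpha^2 - x^2$ gives $2\alpha^2 - 2\alpha x$.) Because $x \in A_h$, the element $\alpha - x$ is self-adjoint, so $(\alpha - x)^2 = (\alpha - x)^*(\alpha - x) \in A_+$. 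The second summand $\alpha^2 - x^2$ lies in $A_+$ by hypothesis, and since $A_+$ is closed under addition, we conclude $2\alpha(\alpha - x) \in A_+$. Finally, because $2\alpha$ is a positive rational and $A_+$ is closed under multiplication by positive rationals, we obtain $\alpha - x \in A_+$, i.e.\ $x \leq \alpha$.

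For the ``in particular'' statement, assume $\|x\| < \alpha$. By definition of $\|x\|$ as an infimum over $\mathbb{Q}_+$, there exists $\gamma \in \mathbb{Q}_+$ with $x^*x \leq \gamma$ and $\gamma < \alpha^2$. Pick a rational $\beta$ with $\sqrt{\gamma} \leq \beta < \alpha$; such $\beta$ exists by density of the rationals. Since $x = x^*$, we have $x^2 = x^*x \leq \gamma \leq \beta^2$, and the first part of the lemma applied to $\beta$ gives $x \leq \beta$. In particular, $x \leq \beta < \alpha$, which is the meaning of $x < \alpha$ in this setting.

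There is essentially no obstacle here—once the identity $2\alpha(\alpha - x) = (\alpha - x)^2 + (\alpha^2 - x^2)$ is spotted, everything reduces to the three elementary closure properties of $A_+$ recalled just before the theorem (closure under addition, under conjugation $y \mapsto x^*yx$, and under multiplication by positive rationals). The only minor care needed is in the second part, where one must approximate $\|x\|$ from above by a rational $\beta$ strictly less than $\alpha$, which is immediate from the infimum definition.
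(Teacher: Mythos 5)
Your proof is correct and follows essentially the same route as the paper: the identity $2\alpha(\alpha-x)=(\alpha-x)^2+(\alpha^2-x^2)$ together with the closure properties of $A_+$, and then a rational $\beta$ strictly between $\|x\|$ and $\alpha$ for the second part. The only difference is that you spell out the rational-approximation step (extracting $\gamma$ from the infimum) slightly more explicitly than the paper does, which is harmless.
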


\begin{proof}
Assume $\alpha \in \mathbb{Q}_+$ and $x^2\leq \alpha^2$. Then $2\alpha (\alpha-x)=(\alpha-x)^2+(\alpha^2-x^2) \geq 0$,
thus $x\leq \alpha$.
Assume now
$\|x\|<\alpha$. Fix $\beta\in \mathbb{Q}_+$ such that $\|x\|<\beta<\alpha$. Then $x^2\leq \beta^2$, thus
$x\leq \beta <\alpha$.
\end{proof}

\begin{proof}[{Proof of Theorem~\ref{thm:cimpric}}]
(1) follows from the assumption $-1\notin A_+$.
It is clear that for $\alpha\in \mathbb{Q}_+$, $\|\alpha x\|=\alpha \|x\|$ and that $\|-x\|=\|x\|$, thus (2) follows.
For (3) it is enough to show that $\|x^*\|\leq \|x\|$.
We fix $\alpha\in \mathbb{Q}_+$ such that $x^*x\leq \alpha$ and argue to show that $xx^*\leq \alpha$.
We note that 
$x(\alpha-x^*x)x^*\in A_+$, as $\alpha-x^*x$ is, hence we have
\[ \left(\alpha/2\right)^2-\left(\alpha/2-xx^*\right)^2=x(\alpha-x^*x)x^* \geq 0, \]
thus $\left(\alpha-x^*x\right)^2 \leq \left(\alpha/2\right)^2$.
By Lemma~\ref{lem:cimpric} we conclude that $xx^*- \alpha/2 \leq \alpha/2$,
hence indeed $xx^* \leq \alpha$.
The proof of (4) is postponed until later.
To prove (5) we fix $\alpha,\beta\in \mathbb{Q}_+$ such that $x^*x \leq \alpha$ and $y^*y\leq \beta$
and argue to show that $(xy)^*(xy)\leq \alpha\beta$.
We note that $y^*(\alpha-x^*x) y$ is in $A_+$, as $\alpha-x^*x$ is,
hence we have
\[ (xy)^*(xy)=\alpha y^*y-y^*(\alpha-x^*x) y \leq \alpha y^*y \leq \alpha\beta, \]
thus indeed $(xy)^*(xy)\leq \alpha\beta$. 
We now prove (6). 
By (5) and (3) we have $\|x^*x\|\leq \|x\|^2$, thus we need to show $\|x\|^2\leq \|x^*x\|$.
We fix $\alpha \in \mathbb{Q}_+$ such that $\|x^*x\|<\alpha$ and argue to show that $\|x\|^2\leq \alpha$.
By Lemma~\ref{lem:cimpric} we have that
$x^*x< \alpha$ and by the definition of $\|\cdot\|$ we get that $\|x\|\leq \sqrt{\alpha}$.
Thus indeed, $\|x\|^2\leq \alpha$.
To see (7) we fix $\alpha \in \mathbb{Q}_+$ such that $\|x^*x+y^*y\|<\alpha$ and argue to show that 
$\|x^*x\|<\alpha$.
By Lemma~\ref{lem:cimpric} we have
$x^*x\leq x^*x+y^*y< \alpha$, thus indeed, by Lemma~\ref{lem:cimpric} again, $\|x^*x\|<\alpha$.
Finally, we are back to (4).
We fix $\alpha,\beta\in \mathbb{Q}_+$ such that $\|x\| < \alpha$ and $\|y\|< \beta$
and argue to show that $(x+y)^*(x+y)\leq (\alpha+\beta)^2$, thus $\|x+y\|\leq \alpha+\beta$.
We have by (5) and (3) that $\|x^*y\|,\|y^*x\| < \alpha\beta$, thus 
$(x^*y)^*(x^*y),(y^*x)^*(y^*x) \leq (\alpha\beta)^2$, and we get
\[ 4\alpha^2\beta^2-(x^*y+y^*x)^2= \]
\[
2\left((\alpha\beta)^2-(x^*y)^*(x^*y)\right)+
2\left((\alpha\beta)^2-(y^*x)^*(y^*x)\right)+
(x^*y-y^*x)^*(x^*y-y^*x)  \leq 0. \]
Therefore 
$(x^*y+y^*x)^2 \leq 4\alpha^2\beta^2$
By Lemma~\ref{lem:cimpric} we get that
$x^*y+y^*x \leq 2\alpha\beta$. Therefore 
\[ (\alpha+\beta)^2-(x+y)^*(x+y)=(\alpha^2-x^*x)+(\beta^2-y^*y)+ \left(2\alpha\beta-(x^*y+y^*x)\right) \geq 0, \]
thus indeed $(x+y)^*(x+y)\leq (\alpha+\beta)^2$. 
This proves (4).

That $A_b$ is a $*$-subalgebra and $A_i$ is an ideal in it follow at once from (1)-(5)
and it is clear that $\|\cdot\|$ descends to a norm on $A_b/A_i$.
The completion of the scalars, $\mathbb{Q}<A_b/A_i$, with respect to this norm is isomorphic to $\mathbb{R}$, thus the completion of
$A_b/A_i$ becomes a real vector space, indeed a Banach space.
By (5) and (3) it is a Banach $*$-algebra
and (6)-(7) are the defining axioms for real $C^*$-algebras among Banach $*$-algebras, see \cite{Ingelstam} or \cite{Palmer}.
\end{proof}

\begin{definition}
We say that the $*$-algebra $A$ is archimedean if $-1$ is not in $A_+$ and all of its elements are bounded, that is $A=A_b$.
In this case we denote by $C^*_{\mathbb{R}}(A)$ the completion of $A/A_i$ and call it the real $C^*$-completion of $A$.
We endow the complexification, $\mathbb{C}\otimes_\mathbb{R} C^*_{\mathbb{R}}(A)$, with its canonical $C^*$-norm and $*$-operator,
and denote it $C^*(A)$. We call $C^*(A)$ the complex $C^*$-completion, or merely the $C^*$-completion of $A$.
\end{definition}
We will need the following 
\begin{observation}
An element $x\in A$ is called a partial isometry if $xx^*x=x$.
If $x$ is a partial isometry then $x$ is bounded. Indeed,
noting that $x^*xx^*x=x^*x$, we have
\[ x^*x \leq x^*x+(1-x^*x)^*(1-x^*x) = 1,\]
thus $\|x\|\leq 1$.
\end{observation}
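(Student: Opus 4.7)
The plan is to exploit the partial isometry identity $xx^*x = x$ in order to show that $x^*x$ is an idempotent, and then to display $1 - x^*x$ as an element of $A_+$, forcing $x^*x \leq 1$.

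First I would left-multiply the defining relation by $x^*$ to obtain $(x^*x)^2 = x^*xx^*x = x^*x$. Writing $p := x^*x$, this gives $p \in A_h$ together with $p^2 = p$, so that $(1-p)^2 = 1 - 2p + p^2 = 1 - p$ and $1 - p$ is itself a self-adjoint idempotent.

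The key step is then to express $1 - x^*x$ as a single positive square: since $1-p$ is self-adjoint, one has $(1-p)^*(1-p) = (1-p)^2 = 1-p$, which lies in $A_+$ by definition. Adding this to $x^*x$ yields
\[ x^*x + (1 - x^*x)^*(1 - x^*x) = 1, \]
hence $x^*x \leq 1$ in the partial order on $A$. By the very definition of $\|\cdot\|$ this gives $\|x\|^2 \leq 1$, so $x \in A_b$ with $\|x\| \leq 1$.

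I do not expect any real obstacle beyond spotting the idempotency: the partial isometry identity does essentially all the work by making $x^*x$ a projection in $A$, after which $1 - x^*x \in A_+$ is immediate from self-adjointness, and the bound on $\|x\|$ is a direct application of the definition of boundedness.
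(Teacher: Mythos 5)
Your proof is correct and follows essentially the same route as the paper: both arguments hinge on the idempotency $x^*xx^*x=x^*x$ to recognize $1-x^*x=(1-x^*x)^*(1-x^*x)\in A_+$, giving $x^*x\leq 1$ and hence $\|x\|\leq 1$ directly from the definition of the seminorm. Your write-up merely makes the intermediate idempotent $p=x^*x$ explicit; there is nothing to add or correct.
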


A state on the $*$-algebra $A$ is a linear functional $s:A\to\mathbb{C}$ satisfying $s(A_+)\subset [0,\infty)$ and $s(1)=1$.
Note that if $A$ is archimedean then it has a state, as $C^*(A)$ has a state which we can pull back to $A$ via the natural map $A\to C^*(A)$.
Conversely, the existence of a state on $A$ is a checkable condition for guaranteeing $-1\notin A_+$.

\begin{corollary} \label{cor:archicriterion}
If a $*$-algebra is generated by partial isometries and it has a state then it is archimedean.
\end{corollary}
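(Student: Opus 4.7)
The plan is to unpack the definition of \emph{archimedean} and observe that each condition falls out from one of the two hypotheses, with Theorem~\ref{thm:cimpric} serving as the bridge that combines them.

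First I would dispose of the condition $-1\notin A_+$. This is where the state is used: if $-1=\sum_{i=1}^n x_i^* x_i \in A_+$, then applying a state $s$ yields $-1=s(-1)\in [0,\infty)$, a contradiction. (Here I use only that $s$ is $\mathbb{Q}$-linear, $s(A_+)\subset [0,\infty)$, and $s(1)=1$.)

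With $-1\notin A_+$ established, Theorem~\ref{thm:cimpric} applies and gives that $A_b$ is a $*$-subalgebra of $A$. The remaining task is to show every element of $A$ lies in $A_b$. For this I would invoke the preceding observation, which tells us that every partial isometry belongs to $A_b$ (with norm at most $1$). Moreover, the collection of partial isometries is closed under $*$: if $xx^*x=x$, then applying the involution gives $x^*xx^*=x^*$, so $x^*$ is again a partial isometry. Hence whether ``generated by partial isometries'' is read in the algebra or $*$-algebra sense, $A$ is generated by a subset of $A_b$.

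Since $A_b$ is a $*$-subalgebra containing a generating set for $A$, we conclude $A\subseteq A_b$, i.e.\ $A=A_b$. Combined with $-1\notin A_+$, this is exactly the definition of archimedean. There is no genuine obstacle here: the content is really in Theorem~\ref{thm:cimpric} (the fact that $A_b$ is closed under the algebraic operations) and in the observation that $xx^*x=x$ forces $x^*x\leq 1$; the corollary is just the bookkeeping that assembles these pieces.
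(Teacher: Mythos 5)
Your proof is correct and follows exactly the route the paper intends: the state rules out $-1\in A_+$ (via $s(-1)=-1\notin[0,\infty)$), the Observation shows partial isometries are bounded, and Theorem~\ref{thm:cimpric} makes $A_b$ a $*$-subalgebra containing the generators, so $A=A_b$. The paper leaves this corollary without a written proof precisely because it is this assembly of the immediately preceding remarks, and your bookkeeping (including closure of partial isometries under $*$) fills it in accurately.
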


Given $*$-algebras $A$ and $B$ we endow the algebras $A\oplus B$ and $A\otimes B$ with the $*$-operations given
by $(x,y)^*=(x^*,y^*)$ and $(x\otimes y)^*=x^* \otimes y^*$ correspondingly.

\begin{lemma} \label{lem:tensor}
If $A$ and $B$ are archimedean $*$-algebras than so are the $*$-algebras $A\oplus B$ and $A\otimes B$.
\end{lemma}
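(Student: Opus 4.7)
My plan is to handle the two claims in turn, treating the direct sum by componentwise reduction and the tensor product by combining an elementary inclusion of positivity cones with a state produced via the $C^*$-completions.

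For $A\oplus B$ the $*$-operation is componentwise, so from $(x,y)^*(x,y)=(x^*x,y^*y)$ we get $(A\oplus B)_+\subseteq A_+\oplus B_+$, and the reverse inclusion comes from the decomposition $(a,b)=(a,0)+(0,b)$ together with $\sum x_i^*x_i\mapsto\sum(x_i,0)^*(x_i,0)$ on each coordinate. Hence $(A\oplus B)_+=A_+\oplus B_+$, so $-1=(-1,-1)$ does not lie in it because $-1\notin A_+$. Boundedness of $(x,y)$ is immediate: pick any rational $\alpha$ with $\alpha-x^*x\in A_+$ and $\alpha-y^*y\in B_+$ (both available because $A$ and $B$ are archimedean), and observe that $(\alpha-x^*x,\alpha-y^*y)\in A_+\oplus B_+$.

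For $A\otimes B$ my first observation is the algebraic inclusion $A_+\otimes B_+\subseteq (A\otimes B)_+$, obtained from the identity
\[ \Bigl(\sum_i u_i^*u_i\Bigr)\otimes\Bigl(\sum_j v_j^*v_j\Bigr)=\sum_{i,j}(u_i\otimes v_j)^*(u_i\otimes v_j). \]
The main obstacle is verifying $-1\notin (A\otimes B)_+$, and my plan is to exhibit a state on $A\otimes B$. Since $A$ and $B$ are archimedean, the canonical maps $\pi_A\colon A\to C^*(A)$ and $\pi_B\colon B\to C^*(B)$ into the complex $C^*$-completions induce a $*$-algebra homomorphism $\phi\colon A\otimes_{\mathbb{Q}} B\to C^*(A)\otimes_{\max}C^*(B)$ whose target is a unital $C^*$-algebra. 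Any state $\sigma$ on the target pulls back to a state $\sigma\circ\phi$ on $A\otimes B$, and since $\sigma(\phi(-1))=-1$, this rules out $-1\in (A\otimes B)_+$. This step is what forces us to leave the purely algebraic setting; I do not see a direct combinatorial construction, as the naive $s_A\otimes s_B$ positivity hinges on the Schur product theorem and on the Hermitian identity $s(x^*)=\overline{s(x)}$, neither of which is quite as immediate over $\mathbb{Q}$.

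Finally, once $-1\notin (A\otimes B)_+$ is secured, Theorem~\ref{thm:cimpric} applies and shows that the set of bounded elements is a $*$-subalgebra of $A\otimes B$. I would then check boundedness on elementary tensors: if $x^*x\leq\alpha$ in $A$ and $y^*y\leq\beta$ in $B$ for positive rationals, then
\[ \alpha\beta-(x\otimes y)^*(x\otimes y)=(\alpha-x^*x)\otimes y^*y+\alpha\otimes(\beta-y^*y) \]
is a sum of elements of $A_+\otimes B_+$, hence lies in $(A\otimes B)_+$, giving $\|x\otimes y\|\leq\sqrt{\alpha\beta}$. Because the bounded $*$-subalgebra contains all elementary tensors and the latter generate $A\otimes B$, we conclude $(A\otimes B)_b=A\otimes B$, completing the proof.
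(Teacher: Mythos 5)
Your proof is correct, and its overall skeleton matches the paper's: exhibit a state to rule out $-1$ being a sum of hermitian squares, then invoke Theorem~\ref{thm:cimpric} to conclude that the bounded elements form a $*$-subalgebra and check boundedness on a generating set. The one genuinely different ingredient is how you produce the state on $A\otimes B$. The paper simply takes states $s$ on $A$ and $t$ on $B$ and asserts that $\frac{1}{2}s+\frac{1}{2}t$ and the product functional $s\cdot t$ are states on $A\oplus B$ and $A\otimes B$ respectively; as you correctly sense, positivity of $s\cdot t$ on $(A\otimes B)_+$ is not a one-liner --- it amounts to the fact that the Gram matrices $\bigl[s(x_i^*x_j)\bigr]$ and $\bigl[t(y_i^*y_j)\bigr]$ are positive semidefinite and that the sum of the entries of their Hadamard product is nonnegative (Schur product theorem), so the direct route does work but needs exactly the argument you flagged. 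Your detour through $\phi\colon A\otimes B\to C^*(A)\otimes_{\max}C^*(B)$ outsources this to standard $C^*$-theory (equivalently, one could just tensor two Hilbert space representations of $A$ and $B$ and take a vector state, avoiding the maximal tensor product altogether); the cost is leaving the purely algebraic setting, the gain is not having to verify hermitianity of $\mathbb{C}$-valued states on a $\mathbb{Q}$-algebra or the Schur argument by hand. The remaining differences are cosmetic: for boundedness the paper observes that $A\otimes\mathbb{Q}$ and $\mathbb{Q}\otimes B$ consist of bounded elements and generate $A\otimes B$ multiplicatively, whereas you bound elementary tensors directly via the identity $\alpha\beta-(x\otimes y)^*(x\otimes y)=(\alpha-x^*x)\otimes y^*y+\alpha\otimes(\beta-y^*y)$ and use closure under addition; and your treatment of $A\oplus B$ via the equality $(A\oplus B)_+=A_+\oplus B_+$ is if anything more self-contained than the paper's, which routes even this case through states.
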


\begin{proof}
Assume $A$ and $B$ are archimedean.
Letting $s$ and $t$ be states on $A$ and $B$ correspondingly, observe that
$\frac{1}{2}s+\frac{1}{2}t$ and $s\cdot t$ are corresponding states on $A\oplus B$ and $A\otimes B$, thus $(-1,-1) \notin (A\oplus B)_+$ and 
$-1 \otimes 1 \notin (A\otimes B)_+$.
Observe also that both
\[ A\oplus \{0\} = A_b \oplus \{0\} < (A\oplus B)_b \]
and 
\[ \{0\} \oplus B = \{0\} \oplus B_b < (A\oplus B)_b, \]
thus by Theorem~\ref{thm:cimpric}
\[ A\oplus B < (A\oplus B)_b < A\oplus B, \]
and we conclude  that $(A\oplus B)_b = A\oplus B$.
Similarly,
\[ A\otimes \mathbb{Q} = A_b \otimes \mathbb{Q} < (A\otimes B)_b \]
and 
\[ \mathbb{Q} \otimes B = \mathbb{Q} \otimes B_b < (A\otimes B)_b, \]
thus by Theorem~\ref{thm:cimpric}
\[ A\otimes B < (A\otimes B)_b < A\otimes B, \]
and we conclude  that $(A\otimes B)_b = A\otimes B$.
Thus indeed both $A\oplus B$ and $A\otimes B$ are archimedean.
\end{proof}

\subsection{On the $C^*$-completion of archimedean $*$-algebras}

A morphism of $*$-algebras is an algebra morphism which preserves the corresponding $*$-operation.
Note that every such morphism is order preserving, hence also norm non-increasing.
Note also that $C^*$-algebras are always archimedean and the order and norm defined in the previous subsection coincide with the $C^*$-theoretic order and norm.
Given an archimedean $*$-algebra $A$, any morphism $A\to B$ into a $C^*$-algeba $B$ must be trivial on $A_i$, and the corresponding 
morphism $A/A_i \to B$ extends to $C^*_{\mathbb{R}}(A)\to B$, by the fact that $A\to B$ is norm non-increasing.
Further, $C^*_{\mathbb{R}}(A)\to B$ extends to the complexification $C^*(A)\to B$, by its universal property.
The following is an immediate consequence of this list of observations.

\begin{proposition} \label{prop:c*universal}
The association $A \mapsto C^*(A)$ is a functor from the category of 
arcimedean $*$-algebras to the category of $C^*$-algebras which is left adjoint to the forgetful functor from the category of C*-algebras to the category of 
arcimedean $*$-algebras.
\end{proposition}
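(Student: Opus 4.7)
My plan is to formally package the chain of observations in the paragraph preceding the proposition into the language of an adjunction, with the unit of the adjunction being the canonical $*$-algebra morphism $\iota_A \colon A \to C^*(A)$ obtained as the composition $A \to A/A_i \to C^*_{\mathbb{R}}(A) \hookrightarrow C^*(A)$. The key point to verify is the universal property of $\iota_A$; once this is in hand, functoriality and the adjunction follow by formal nonsense.

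So first, I would fix an archimedean $*$-algebra $A$ and a $C^*$-algebra $B$, and start with an arbitrary $*$-algebra morphism $\phi \colon A \to B$. Because $B$ is itself archimedean (with its $C^*$-order and $C^*$-norm), and because $\phi$ sends sums of the form $\sum x_i^* x_i$ to sums of the same form, the map $\phi$ is order-preserving and hence norm non-increasing: $x^* x \le \alpha$ in $A$ forces $\phi(x)^* \phi(x) \le \alpha$ in $B$, so $\|\phi(x)\|_B \le \|x\|_A$. In particular $\phi$ annihilates $A_i$ and descends to a $\mathbb{Q}$-linear $*$-morphism $\bar\phi \colon A/A_i \to B$ that is uniformly continuous for the quotient norm. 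Uniform continuity extends $\bar\phi$ uniquely to a real $*$-algebra morphism $C^*_{\mathbb{R}}(A) \to B$, and the universal property of the complexification then produces a unique complex $*$-algebra morphism $\tilde\phi \colon C^*(A) \to B$ satisfying $\tilde\phi \circ \iota_A = \phi$. Uniqueness at each stage of the factorization gives a bijection
\[ \mathrm{Hom}_{C^*}\bigl(C^*(A), B\bigr) \;\xrightarrow{\ \sim\ }\; \mathrm{Hom}_{*\mathrm{-alg}}\bigl(A, U(B)\bigr), \qquad \Psi \mapsto \Psi \circ \iota_A, \]
where $U$ denotes the forgetful functor.

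With this universal property established, functoriality is immediate: given a morphism $\phi \colon A \to A'$ of archimedean $*$-algebras, apply the universal property to $\iota_{A'} \circ \phi$ to define a unique $C^*(\phi) \colon C^*(A) \to C^*(A')$; the identities $C^*(\mathrm{id}_A) = \mathrm{id}_{C^*(A)}$ and $C^*(\psi \circ \phi) = C^*(\psi) \circ C^*(\phi)$ follow from uniqueness. Naturality of the bijection displayed above in both $A$ and $B$ is then a routine diagram chase, and one reads off the adjunction with $\iota$ as its unit. I do not foresee any real obstacle: every technical input (norm non-increase under $*$-morphisms into $C^*$-algebras, factorization through $A/A_i$, extension by uniform continuity, and complexification of real $*$-algebra morphisms) has either been prepared in the preceding subsection or is a standard fact about real-versus-complex $C^*$-algebras that I would quote rather than reprove.
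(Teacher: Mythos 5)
Your proposal is correct and takes essentially the same route as the paper, which offers no formal proof beyond declaring the proposition ``an immediate consequence'' of the observations you expand: $*$-morphisms into a $C^*$-algebra are order-preserving hence norm non-increasing, kill $A_i$, extend to $C^*_{\mathbb{R}}(A)$ by continuity, and then to $C^*(A)$ by the universal property of complexification. Your added attention to uniqueness at each stage (surjectivity onto $A/A_i$, density in $C^*_{\mathbb{R}}(A)$, determination of the complexified map by its real part) is exactly what turns the paper's existence statement into the hom-set bijection defining the adjunction.
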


The following two propositions, comparing properties of elements of $A$ with corresponding properties of their images in 
$C^*(A)$, are due Schm\"{u}dgen.

\begin{proposition}[{cf. \cite[Proposition~14]{schmudgen}}] \label{prop:poscriterion}
Let $A$ be an archimedean $*$-algebra and $x\in A_h$. Denote by $\bar{x}$ the image of $x$ in $C^*(A)$.
Then $\bar{x}\geq 0$ iff for every $\alpha\in \mathbb{Q}_+$, $x> -\alpha$.
\end{proposition}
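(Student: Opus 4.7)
The plan is to move between $A$ and $C^*(A)$ using Proposition~\ref{prop:c*universal}, translating the algebraic positivity in $A$ to and from the $C^*$-algebraic positivity in $C^*(A)$. The non-trivial direction is handled via the standard $C^*$-algebraic characterization of positivity in terms of norms, pulled back to $A$ through the isometric embedding $A_b/A_i\hookrightarrow C^*(A)$, and then converted into an order relation using Lemma~\ref{lem:cimpric}.

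The direction ($\Leftarrow$) is immediate: since $A\to C^*(A)$ is a $*$-algebra morphism, it carries $A_+$ into the positive cone of $C^*(A)$; so assuming $x+\alpha\in A_+$ for every rational $\alpha>0$ gives $\bar x+\alpha\geq 0$ in $C^*(A)$ for every such $\alpha$, and closedness of the positive cone in the $C^*$-algebra yields $\bar x\geq 0$.

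For ($\Rightarrow$), I would fix a rational $c>\|\bar x\|_{C^*(A)}$. Since $\bar x\geq 0$ in $C^*(A)$, its spectrum lies in $[0,\|\bar x\|]\subset[0,c]$, so $c-\bar x$ has spectrum in $[0,c]$, and hence $\|c-\bar x\|_{C^*(A)}\leq c$. The isometric embedding of $A_b/A_i$ into $C^*(A)$ transports this back to the seminorm on $A$: $\|c-x\|\leq c<c+\epsilon$ for every rational $\epsilon>0$. Unwinding the definition of $\|\cdot\|$, this means $(c-x)^*(c-x)\leq(c+\epsilon)^2$, and since $c-x\in A_h$ this reads $(c-x)^2\leq(c+\epsilon)^2$. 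Lemma~\ref{lem:cimpric} now gives $c-x\leq c+\epsilon$, that is, $x+\epsilon\in A_+$. Since $\epsilon\in\mathbb{Q}_+$ is arbitrary, for any $\alpha\in\mathbb{Q}_+$ we get $x+\alpha/2\in A_+$, which is exactly the strict inequality $x>-\alpha$.

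The only conceptually delicate step is recognizing that Lemma~\ref{lem:cimpric} is the precise device for converting a $C^*$-norm estimate back into a genuine algebraic order relation in $A$; once this is seen, everything else amounts to unpacking the definitions and using the universal property of $C^*(A)$.
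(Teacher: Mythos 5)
Your proof is correct and takes essentially the same route as the paper's: the easy direction via the order-preserving map $A\to C^*(A)$ and closedness/spectral positivity, and the converse by fixing a rational $c\geq\|\bar x\|$, noting the spectrum of $c-\bar x$ lies in $[0,c]$ so that $\|c-x\|\leq c<c+\epsilon$, and converting this back into an order relation with Lemma~\ref{lem:cimpric}. The only cosmetic difference is that you unwind the definition of $\|\cdot\|$ by hand where the paper simply invokes the second assertion of Lemma~\ref{lem:cimpric}.
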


\begin{proof}
Assume that for every $\alpha\in \mathbb{Q}_+$, $x> -\alpha$. Then for every $\alpha\in \mathbb{Q}_+$, $\bar{x}> -\alpha$.
It follows that the spectrum of $\bar{x}$ is contained in $[0,\infty)$, thus $\bar{x}\geq 0$.
Assume now that $\bar{x}\geq 0$ and fix $\alpha\in \mathbb{Q}_+$.
Fix $\beta\in \mathbb{Q}_+$ such that $\|\bar{x}\|\leq \beta$ and observe that the spectrum of $\bar{x}$ is contained in $[0,\beta]$, thus so is the spectrum of $\beta-\bar{x}$.
It follows that $\|\beta-\bar{x}\|\leq \beta$, thus
\[ \|\beta-x\|=\|\beta-\bar{x}\|\leq \beta<\beta+\alpha. \]
By Lemma~\ref{lem:cimpric} we get that $\beta-x< \beta+\alpha$,
thus indeed $x>-\alpha$.
\end{proof}

\begin{proposition}[{cf. \cite[Proposition~15]{schmudgen}}]
Let $A$ be an archimedean $*$-algebra and $x\in A_+$ a positive element.
Denote by $\bar{x}$ the image of $x$ in $C^*(A)$.
Then $\bar{x}$ is invertible in $C^*(A)$ iff
there exists $\in \mathbb{Q}_+$ such that $x\geq \alpha$.
\end{proposition}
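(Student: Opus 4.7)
For the reverse direction, if $x \geq \alpha$ in $A$ for some $\alpha \in \mathbb{Q}_+$, then $x - \alpha \in A_+$ maps to a positive element of $C^*(A)$, so $\bar{x} \geq \alpha > 0$ in $C^*(A)$; hence $\bar{x}$ has spectrum bounded below by $\alpha$ and is invertible.

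For the forward direction, suppose $\bar{x}$ is invertible. Together with $\bar{x} \geq 0$ (which follows from $x \in A_+$), this forces the spectrum of $\bar{x}$ to lie in $[c, M]$ with $c = 1/\|\bar{x}^{-1}\| > 0$ and $M = \|\bar{x}\|$. The key step is a rescaling trick: choose a positive rational $t < 2/M$. Then the spectrum of $t\bar{x} - 1$ lies in $(-1, 1)$, so $\|t\bar{x} - 1\| < 1$ in $C^*(A)$. Because the embedding $A/A_i \hookrightarrow C^*(A)$ is an isometry, the same bound holds already in $A$, i.e., there is a rational $\gamma < 1$ with $(tx - 1)^*(tx - 1) \leq \gamma$. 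Since $tx - 1$ is self-adjoint, this reads $(tx - 1)^2 \leq \gamma$. Picking a rational $\beta$ with $\sqrt{\gamma} < \beta < 1$ and applying Lemma~\ref{lem:cimpric} to both $\pm(tx - 1)$ yields $-\beta \leq tx - 1 \leq \beta$, whence $x \geq (1 - \beta)/t$, a positive rational.

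The main obstacle is that $A_+$ is generally not closed in the $C^*$-topology, so one cannot simply pull back the positive element $\bar{x} - c \in C^*(A)_+$ directly to $A$; Proposition~\ref{prop:poscriterion} only gives $x > -\alpha$ for all $\alpha \in \mathbb{Q}_+$, which is strictly weaker than what we need. The rescaling $x \mapsto tx$ circumvents this by converting the spectral information $\sigma(\bar{x}) \subset [c, M]$ in $C^*(A)$ into a norm bound on a self-adjoint square that already lives inside $A$, after which Lemma~\ref{lem:cimpric} transports the order inequality from $\mathbb{Q}$ into $A$.
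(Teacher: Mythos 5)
Your proof is correct, but it takes a genuinely different route from the paper's. The paper disposes of the forward direction in two lines by invoking Proposition~\ref{prop:poscriterion} on the shifted element: if $\bar{x}$ is invertible and positive, its spectrum lies in $[\alpha,\infty)$ for some rational $\alpha>0$, so $\overline{x-\alpha}\geq 0$, and Proposition~\ref{prop:poscriterion} then gives $x-\alpha>-\beta$ for \emph{every} rational $\beta>0$; taking $\beta=\alpha/2$ yields $x\geq \alpha/2$. In other words, the ``obstacle'' you describe is not actually an obstacle: the slack $\beta$ in the conclusion of Proposition~\ref{prop:poscriterion} is harmless precisely because the invertibility of $\bar{x}$ supplies a strict rational gap $\alpha$ that absorbs it. Your rescaling argument --- choosing rational $t<2/M$ so that $\|t\bar{x}-1\|<1$, transporting that norm bound back to $A$ through the isometric embedding, and applying Lemma~\ref{lem:cimpric} to $\pm(tx-1)$ --- is sound (all the small checks go through: $t$ and $\gamma$ can be taken rational, $tx-1\in A_h$, and $(1-\beta)/t\in\mathbb{Q}_+$), and it is in effect a self-contained re-derivation of the special case of Proposition~\ref{prop:poscriterion} that is needed here, working directly from the definition of the seminorm rather than through the already-established spectral translation. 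What your version buys is independence from Proposition~\ref{prop:poscriterion}; what it costs is length, and the comparison is worth internalizing: whenever you have a \emph{strict} spectral bound in $C^*(A)$, Proposition~\ref{prop:poscriterion} already converts it to a genuine order inequality in $A$ after a rational shift.
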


\begin{proof}
By Proposition~\ref{prop:poscriterion}
there exists $\alpha \in \mathbb{Q}_+$ such that $x\geq \alpha$ iff
there exists $\alpha \in \mathbb{Q}_+$ such that $\bar{x}\geq \alpha$.
As $\bar{x}\geq 0$, the latter condition is equivalent to $0$ not being in the spectrum of $\bar{x}$,
and by functional calculus this is equivalent to the invertibility of $\bar{x}$.
\end{proof}

We end up this section by stating a corollary that will be used later.
This corollary is merely a reformulation of the last proposition, taken into account the universality of $C^*(A)$ discussed in 
Proposition~\ref{prop:c*universal}, and the fact that every $C^*$-algebra could be embedded in the $C^*$-algebra of bounded operators 
on a Hilbert space.

\begin{corollary} \label{cor:invcriterion}
Let $A$ be an archimedean $*$-algebra and $x\in A_+$ a positive element.
There exists $\alpha \in \mathbb{Q}_+$ such that $x\geq \alpha$ iff $\rho(x)$ is invertible
for every $*$-representation $\rho:A\to B(V)$, where $V$ is a Hilbert space and $B(V)$ the $*$-algebra of bounded operators
on $H$.
\end{corollary}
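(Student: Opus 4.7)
The plan is to reduce this corollary to the previous proposition by threading the universal property of $C^*(A)$ (Proposition~\ref{prop:c*universal}) together with the Gelfand--Naimark theorem. By the previous proposition, the statement ``there exists $\alpha\in \mathbb{Q}_+$ with $x\geq\alpha$'' is equivalent to the image $\bar{x}$ of $x$ in $C^*(A)$ being invertible, so our task reduces to characterizing the invertibility of $\bar{x}$ in terms of the family of $*$-representations $\rho:A\to B(V)$.

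For the forward direction, suppose $\bar x$ is invertible in $C^*(A)$. Given any $*$-representation $\rho:A\to B(V)$, the target $B(V)$ is a $C^*$-algebra, so by Proposition~\ref{prop:c*universal} the morphism $\rho$ factors as $\rho=\bar\rho\circ\pi$, where $\pi:A\to C^*(A)$ is the canonical map and $\bar\rho:C^*(A)\to B(V)$ is a unital $*$-homomorphism of $C^*$-algebras. Unital $C^*$-algebra homomorphisms send invertibles to invertibles (via $\bar\rho(\bar x)\bar\rho(\bar x^{-1})=1$), so $\rho(x)=\bar\rho(\bar x)$ is invertible in $B(V)$.

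For the converse, assume $\rho(x)$ is invertible in $B(V)$ for every $*$-representation $\rho:A\to B(V)$ on a Hilbert space. By Gelfand--Naimark, there exists a faithful (unital) $*$-representation $\iota:C^*(A)\to B(V)$ for some Hilbert space $V$. Composing with $\pi$ gives a $*$-representation $\rho=\iota\circ\pi:A\to B(V)$, and by hypothesis $\iota(\bar x)=\rho(x)$ is invertible in $B(V)$. Since $\iota$ is an isometric embedding of unital $C^*$-algebras, spectral permanence applies: the spectrum of $\bar x$ computed in $C^*(A)$ coincides with its spectrum in $B(V)$, so $\bar x$ is invertible in $C^*(A)$. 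Invoking the previous proposition concludes that $x\geq \alpha$ for some $\alpha \in \mathbb{Q}_+$.

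No step here is genuinely hard; the only subtlety worth flagging is that ``invertibility in the larger algebra implies invertibility in the $C^*$-subalgebra'', which one must justify by spectral permanence rather than by a naive attempt to pull back an inverse under $\iota$ (the image of $\iota$ need not be all of $B(V)$). Once that standard fact is noted, the corollary is just the composition of the universal property of $C^*(A)$, the existence of a faithful Hilbert space representation of $C^*(A)$, and the previous proposition.
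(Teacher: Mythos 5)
Your proposal is correct and follows exactly the route the paper intends: the authors dispose of this corollary in one sentence by citing the previous proposition, the universal property of $C^*(A)$ from Proposition~\ref{prop:c*universal}, and the embeddability of any $C^*$-algebra into some $B(V)$, which are precisely the three ingredients you assemble. Your explicit flagging of spectral permanence in the converse direction is a detail the paper leaves implicit, but it is the standard fact and your treatment of it is right.
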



\subsection{Matrix algebras over $*$-algebras}

In this subsection we discuss well understood results regarding representations of matrix algebras over unital $*$-algebras.

We fix a natural number $k$.
The matrix algebra $M_k(\mathbb{Q})$ is endowed with the standard $*$-operation, namely the transposition.
Given a unital $*$-algebra $A$, 
we identify as usual the tensor product $*$-algebra $M_k(\mathbb{Q})\otimes A$ with the matrix algebra $M_k(A)$.
Under this identification, $A\simeq \mathbb{Q}\otimes A$ is identified with the scalar matrices in $M_k(A)$
and $M_k(\mathbb{Q})\simeq M_k(\mathbb{Q})\otimes \mathbb{Q}$ is identified with the $\mathbb{Q}$-valued matrices.
In particular, $M_k(A)$ is generated as an algebra by its commuting subalgebras $A$ and $M_k(\mathbb{Q})$.
Under this identification the tensor product $*$-operation on $M_k(\mathbb{Q})\otimes A$ is identified with the operation
\[ (x_{ij})^*=(x_{ji}^*) \]
on $M_k(A)$.

We observe that for every $*$-representation $\rho:A\to B(U)$, where $U$ is a Hilbert space and $B(U)$ is the $*$-algebra of bounded operators
on $U$, 
there exists a naturally associated $*$-representation $M_k(\rho):M_k(A) \to B(U^n)$.
Under our identification $M_k(A) \simeq M_k(\mathbb{Q})\otimes A$,
and upon identifying $U^n\simeq \mathbb{Q}^n\otimes U$, $M_k(\rho)$ is identified with $\mbox{id}\otimes \rho$.
The next proposition explains that all Hilbert $*$-representations of $M_k(A)$ are constructed in this way.

\begin{proposition}\label{proposition: reps of M_k(A) are direct sums}
For every $*$-representation $\theta:M_k(A)\to B(V)$, where $V$ is a Hilbert space and $B(V)$ the $*$-algebra of bounded operators
on $V$, 
there exits a Hilbert space $U$ and a $*$-representation $\rho:A\to B(U)$ such that $\theta$ is isomorphic to $M_k(\rho)$ as Hilbert $*$-representations,
that is there exists an isometric isomorphism $\phi:V\to U^n$ such that for every $x\in M_k(A)$, $M_k(\rho)(x)=\phi(\theta(x))$.
\end{proposition}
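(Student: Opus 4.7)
The plan is to exploit the standard matrix units $e_{ij}\in M_k(\mathbb{Q})\subset M_k(A)$, which satisfy $e_{ij}^*=e_{ji}$, $e_{ij}e_{pq}=\delta_{jp}e_{iq}$, and $\sum_i e_{ii}=1$. Under $\theta$ these go to bounded operators $\theta(e_{ij})$ on $V$ with the analogous relations, so in particular the operators $P_i:=\theta(e_{ii})$ form a family of mutually orthogonal projections with $\sum P_i=\mathrm{id}_V$, and each $\theta(e_{j1})$ is a partial isometry with initial space $P_1V$ and final space $P_jV$.

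I would define $U:=P_1V$ and build an isometric isomorphism $\phi:V\to U^k$ by the formula
\[ \phi(v)=\bigl(\theta(e_{1i})v\bigr)_{i=1}^{k}. \]
Each component lies in $U$ because $e_{11}e_{1i}=e_{1i}$, so $P_1\theta(e_{1i})=\theta(e_{1i})$. A short computation using $e_{i1}e_{1i}=e_{ii}$ shows that $\sum_i\|\theta(e_{1i})v\|^2=\sum_i\langle P_iv,v\rangle=\|v\|^2$, establishing isometry, and the explicit inverse $(u_1,\dots,u_k)\mapsto\sum_j\theta(e_{j1})u_j$ witnesses surjectivity. Next, I would define $\rho:A\to B(U)$ by restriction: since any $a\in A$ is identified with the scalar matrix $a\otimes 1$, it commutes with every $e_{ij}$, so $\theta(a)$ commutes with $P_1$ and preserves $U$; set $\rho(a):=\theta(a)|_U$. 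Multiplicativity, additivity and the $*$-property of $\rho$ follow from those of $\theta$.

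It then remains to verify the intertwining identity $\phi\circ\theta(x)=M_k(\rho)(x)\circ\phi$ for all $x\in M_k(A)$. By linearity it suffices to check this on the generating elements $x=a\cdot e_{pq}$ with $a\in A$, and here a direct calculation gives
\[ \phi\bigl(\theta(a\cdot e_{pq})v\bigr)_i=\theta(e_{1i})\theta(a)\theta(e_{pq})v=\delta_{ip}\,\theta(a)\theta(e_{1q})v=\delta_{ip}\,\rho(a)\phi(v)_q, \]
which is precisely the $i$-th component of $M_k(\rho)(a\cdot e_{pq})\phi(v)$, using that $\theta(e_{1q})v\in U$ so that $\theta(a)$ acts there as $\rho(a)$.

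There is no serious obstacle; the argument is the standard Morita-type reconstruction from a system of matrix units and its only subtleties are bookkeeping, namely keeping track of which products of matrix units land inside $P_1V$ and verifying that the identification $M_k(A)\simeq M_k(\mathbb{Q})\otimes A$ makes $A$ commute with all the $e_{ij}$. The mildest care is needed in checking boundedness of $\rho(a)$, which is automatic since $\rho(a)$ is the restriction of the bounded operator $\theta(a)$ to an invariant closed subspace.
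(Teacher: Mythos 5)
Your argument is correct and is essentially the paper's own proof: both decompose $V=\bigoplus_i \theta(e_{ii})V$ via the matrix units, take $U=\theta(e_{11})V$, use the partial isometries $\theta(e_{1i})$ (the paper invokes permutation matrices, which amounts to the same thing) to identify each summand with $U$, and let $A$ act on $U$ by restriction using that the scalar matrices commute with $M_k(\mathbb{Q})$. The only difference is that you write out explicitly the isometry $\phi$, its inverse, and the intertwining check on the generators $a\cdot e_{pq}$, details the paper leaves implicit.
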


\begin{proof}
Let $e_{ij}\in M_k(\mathbb{Q})$ be the standard basis matrix and let $p_i=e_{ii}$. Let $V_i=p_iV$ and $U=V_1$.
Note that, by the commutation of $A$ and $M_k(\mathbb{Q})$, each $V_i$ is a $*$-representation of $A$.
Applying permutation matrices one obtains that they are all isomorphic $A$-representations. In particular, they are all isomorphic, as $A$-representations, to $U$. 
Since $\sum p_i=1$ and $p_ip_j=\delta_i^j p_i,$
we obtain $$V=\oplus V_i \simeq U^n.$$
This is an isomorphism as $A$-representations and as $M_n(\mathbb{Q})$-representation, thus this is an isomorphism as $M_k(A)$-representations.
\end{proof}

Noting that the normalized trace is a state on $M_k(\mathbb{Q})$ and that the matrices $e_{ij}$ are partial isometries that generate $M_k(\mathbb{Q})$
we get by Corollary~\ref{cor:archicriterion} that $M_k(\mathbb{Q})$ is arcimedean.
As $M_k(A)\simeq M_k(\mathbb{Q}) \otimes A$, we get by Lemma~\ref{lem:tensor}
the following.

\begin{corollary} \label{cor:Mncarch}
If $A$ is an archimedean $*$-algebra then so is $M_k(A)$.
\end{corollary}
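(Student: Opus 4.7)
The plan is to assemble two tools built earlier in the section: Corollary~\ref{cor:archicriterion}, which detects archimedeanity through partial isometries and the existence of a state, and Lemma~\ref{lem:tensor}, which says that the tensor product of two archimedean $*$-algebras is archimedean. The bridge between these tools and the claim is the canonical $*$-algebra identification $M_k(A) \simeq M_k(\mathbb{Q}) \otimes A$ recalled at the start of the subsection, under which the $*$-operation $(x_{ij})^* = (x_{ji}^*)$ on matrix entries agrees with the tensor product $*$-operation. So it suffices to verify archimedeanity of the single factor $M_k(\mathbb{Q})$ and then invoke the tensor lemma.

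First I would verify that $M_k(\mathbb{Q})$ is archimedean by checking the two hypotheses of Corollary~\ref{cor:archicriterion}. The matrix units $e_{ij}$ generate $M_k(\mathbb{Q})$ as a $\mathbb{Q}$-algebra, and each is a partial isometry: since $e_{ij}^* = e_{ji}$, one computes $e_{ij} e_{ij}^* e_{ij} = e_{ij} e_{ji} e_{ij} = e_{ij}$. A state is furnished by the normalized trace $s(x) = k^{-1}\operatorname{tr}(x)$: it is $\mathbb{Q}$-linear with $s(1) = 1$, and $s$ is non-negative on sums $\sum_i x_i^* x_i$ because $\operatorname{tr}(x^* x) = \sum_{p,q} x_{pq}^2 \geq 0$ for any rational matrix $x$. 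Hence Corollary~\ref{cor:archicriterion} applies and $M_k(\mathbb{Q})$ is archimedean.

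Second, given that $A$ is archimedean by hypothesis and $M_k(\mathbb{Q})$ is archimedean by the previous paragraph, Lemma~\ref{lem:tensor} yields that the tensor product $M_k(\mathbb{Q}) \otimes A$ is archimedean. Transporting this through the $*$-algebra isomorphism $M_k(A) \simeq M_k(\mathbb{Q}) \otimes A$ gives that $M_k(A)$ is archimedean, which is the content of the corollary.

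There is no real obstacle here; the proof is a direct assembly of machinery already in place. The only step requiring any care is the state/partial-isometry verification for $M_k(\mathbb{Q})$, which is immediate from the properties of the matrix units and the trace.
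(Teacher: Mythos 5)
Your proof is correct and follows exactly the paper's route: establish that $M_k(\mathbb{Q})$ is archimedean via Corollary~\ref{cor:archicriterion} (the matrix units are generating partial isometries and the normalized trace is a state), then apply Lemma~\ref{lem:tensor} to $M_k(A)\simeq M_k(\mathbb{Q})\otimes A$. The verifications you spell out for the partial isometries and the trace are the same ones the paper leaves implicit.
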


\subsection{$x^*x$ as sum of squares for a non-square matrix}

This subsection is devoted to the proof of an observation related to non-square matrices that will be useful for us later on.
We let $A$ be a $*$-algebra and, for naturals $k$ and $k'$, we consider the obvious $*$-operation from $M_{k\times k'}(A)$ 
to $M_{k'\times k}(A)$ given by $(x_{ij})^*=(x_{ji}^*)$.

\begin{lemma} \label{lem:squarematrices}
For $x\in M_{k\times k'}(A)$, we have $x^*x\in M_{k'}(A)_+$.
\end{lemma}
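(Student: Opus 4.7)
The plan is to reduce the non-square case to the square case by writing $x^*x$ as a sum of contributions from the individual rows of $x$, each of which can be embedded into $M_{k'}(A)$ in an obvious way.

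First, I would decompose $x$ row by row. Write $r_i \in M_{1 \times k'}(A)$ for the $i$-th row of $x$, so that
\[ x^*x = \sum_{i=1}^{k} r_i^* r_i, \]
which is just the usual expansion of matrix multiplication (the entry in position $(j,l)$ on both sides equals $\sum_i x_{ij}^* x_{il}$). This already expresses $x^*x$ as a sum, but the individual summands $r_i^* r_i$ live in $M_{k'}(A)$ while the $r_i$ themselves do not, so they are not yet manifestly of the form needed for membership in $M_{k'}(A)_+$.

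Second, I would promote each row to a square matrix. For each $i$, let $y_i \in M_{k'}(A)$ be the matrix whose first row is $r_i$ and whose remaining $k'-1$ rows are zero. A direct index computation then gives
\[ (y_i^* y_i)_{jl} = \sum_{m=1}^{k'} (y_i)_{mj}^*\, (y_i)_{ml} = x_{ij}^*\, x_{il} = (r_i^* r_i)_{jl}, \]
because only the $m=1$ term contributes. Summing over $i$ yields $x^*x = \sum_{i=1}^k y_i^* y_i$, and by the definition of $M_{k'}(A)_+$ this is what we wanted.

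There is essentially no obstacle here; the only mildly delicate point is making sure the embedding of a $1\times k'$ row into a $k' \times k'$ matrix is carried out in a way that the product $y_i^* y_i$ genuinely reproduces $r_i^* r_i$, which the computation above confirms. (The degenerate case $k=0$ gives $x^*x = 0 \in M_{k'}(A)_+$ trivially, and $k'=0$ makes the statement vacuous.)
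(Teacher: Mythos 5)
Your proof is correct and is essentially the paper's argument: the paper writes $y_i = e^{k'}_1(e^k_i)^*x$, which is precisely your ``$i$-th row of $x$ promoted to a $k'\times k'$ matrix with remaining rows zero,'' and then obtains $x^*x=\sum_i y_i^*y_i$ in the same way. The only difference is notational (basis vectors and the identity $\sum_i e^k_i(e^k_i)^*=1$ versus your explicit index computation).
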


\begin{proof}
For every naturals $n$ and $i\leq n$, we let $e^n_i\in M_{n\times 1}(\mathbb{Q})$ be the standard $i$'th basis vector.
We note that $(e^n_1)^*e^n_1=1\in \mathbb{Q}$ and that $\sum_{i=1}^n e^n_i(e^n_i)^*=1\in M_n(\mathbb{Q})$.
We thus have
\[ x^*x=x^*\left(\sum_{i=1}^{k} e^{k}_i(e^{k}_i)^*\right)x=\sum_{i=1}^k x^*e^k_i(e^k_i)^*x=
\sum_{i=1}^k x^*e^k_i(e^{k'}_1)^*e^{k'}_1(e^k_i)^*x= \sum_{i=1}^k y_i^*y_i, \]
where $y_i=e^{k'}_1(e^k_i)^*x \in M_{k'}(A)$.
\end{proof}

\subsection{Matrix algebras over group algebras}

In this subsection we fix a group $\Gamma$ and specialize the discussion to the group algebra $A=\mathbb{Q}\Gamma$,
that is the algebra of finitely supported rational valued functions on $\Gamma$.
We endow $\mathbb{Q}\Gamma$ with the standard $*$-operation given by $f^*(g)=f(g^{-1})$.
This algebra is generated by the delta functions, which are partial isometries, and the evaluation at the identity element is a state.
Hence, by Lemma~\ref{lem:tensor}, $\mathbb{Q}\Gamma$ is archimedean.

Viewing $\Gamma$ as a subgroup of $\mathbb{Q}\Gamma$ via the identification of elements of $\Gamma$ with the corresponding delta functions 
in $\mathbb{Q}\Gamma$, we note that every $*$-representations of $\mathbb{Q}\Gamma$ on Hilbert spaces restricts to a unitary representation of $\Gamma$, while every unitary representation of $\Gamma$ extends by linearity to a $*$-representation of $\mathbb{Q}\Gamma$.
We thus get a natural correspondence between unitary representations of $\Gamma$ and Hilbert $*$-representations of $\mathbb{Q}\Gamma$.
We thus observe that the $C^*$-completion of $\mathbb{Q}\Gamma$, $C^*(\mathbb{Q}\Gamma)$, is nothing but $C^*\Gamma$, the maximal $C^*$-algebra of $\Gamma$.

We now fix a natural $k$ and discuss the matrix algebra $M_k(\mathbb{Q}\Gamma)$.
By Corollary~\ref{cor:Mncarch} we have that $M_k(\mathbb{Q}\Gamma)$ is archimedean.
By Proposition~\ref{proposition: reps of M_k(A) are direct sums}
we see that Hilbert $*$-representations of $M_k(\mathbb{Q}\Gamma)$ are all induced from Hilbert $*$-representations of $\mathbb{Q}\Gamma$,
thus from unitary representations of $\Gamma$.
We end up this section with the following corollary, which is a specialization of Corollary~\ref{cor:invcriterion}
in view of the above discussion.

\begin{corollary} \label{cor:unitaryinvcriterion}
Let $x\in M_k(\mathbb{Q}\Gamma)$ be a positive element.
There exists $0< \alpha\in \mathbb{Q}_+$ such that $x\geq \alpha$ iff the image of $x$ under $M_k(\rho)$ is invertible
for every unitary representation $\rho$ of $\Gamma$.
\end{corollary}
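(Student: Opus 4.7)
The plan is to deduce this corollary by directly specializing Corollary~\ref{cor:invcriterion} to the archimedean $*$-algebra $A=M_k(\mathbb{Q}\Gamma)$ and then translating the quantification over $*$-representations of $A$ into a quantification over unitary representations of $\Gamma$. All the work has effectively been done in the preceding subsections; the job here is to assemble the pieces in the right order.

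First I would invoke the results that $\mathbb{Q}\Gamma$ is archimedean (established in this subsection via Corollary~\ref{cor:archicriterion}, using that the delta functions are partial isometries generating $\mathbb{Q}\Gamma$ and that evaluation at the identity is a state) and, via Corollary~\ref{cor:Mncarch}, that $M_k(\mathbb{Q}\Gamma)$ is archimedean as well. This licenses applying Corollary~\ref{cor:invcriterion} to $x\in M_k(\mathbb{Q}\Gamma)_+$: the existence of some $\alpha\in\mathbb{Q}_+$ with $x\geq\alpha$ is equivalent to $\theta(x)$ being invertible for every $*$-representation $\theta:M_k(\mathbb{Q}\Gamma)\to B(V)$ on a Hilbert space $V$.

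Next I would match this family of $*$-representations with the family appearing in the corollary. By Proposition~\ref{proposition: reps of M_k(A) are direct sums}, every Hilbert $*$-representation $\theta$ of $M_k(\mathbb{Q}\Gamma)$ is isometrically isomorphic to one of the form $M_k(\rho)$, where $\rho:\mathbb{Q}\Gamma\to B(U)$ is a $*$-representation on some Hilbert space $U$. Since isometric isomorphism preserves invertibility, $\theta(x)$ is invertible if and only if $M_k(\rho)(x)$ is. Conversely, every such $\rho$ does give rise to a Hilbert $*$-representation $M_k(\rho)$ of $M_k(\mathbb{Q}\Gamma)$, so quantifying over all $\theta$ is the same as quantifying over all $\rho$.

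Finally I would use the natural bijection, already recalled in this subsection, between $*$-representations of $\mathbb{Q}\Gamma$ on Hilbert spaces and unitary representations of $\Gamma$: a unitary representation of $\Gamma$ extends linearly to a Hilbert $*$-representation of $\mathbb{Q}\Gamma$, and conversely every such $*$-representation restricts on group elements to a unitary representation. Composing these identifications, the condition that $M_k(\rho)(x)$ be invertible for every $*$-representation $\rho$ of $\mathbb{Q}\Gamma$ becomes exactly the condition that the image of $x$ under $M_k(\rho)$ be invertible for every unitary representation $\rho$ of $\Gamma$. No real obstacle should arise here; the only point that is slightly more than bookkeeping is remembering that Proposition~\ref{proposition: reps of M_k(A) are direct sums} covers \emph{all} Hilbert $*$-representations of $M_k(\mathbb{Q}\Gamma)$, not merely those of the form $M_k(\rho)$ by construction, so that the ``for every $*$-representation'' clause in Corollary~\ref{cor:invcriterion} really does translate into ``for every unitary representation of $\Gamma$'' without loss.
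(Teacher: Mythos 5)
Your proposal is correct and follows exactly the route the paper takes: establish that $M_k(\mathbb{Q}\Gamma)$ is archimedean via Corollary~\ref{cor:Mncarch}, specialize Corollary~\ref{cor:invcriterion}, and then use Proposition~\ref{proposition: reps of M_k(A) are direct sums} together with the correspondence between Hilbert $*$-representations of $\mathbb{Q}\Gamma$ and unitary representations of $\Gamma$ to translate the quantifier over $*$-representations into one over unitary representations. Your explicit remark that the proposition covers \emph{all} Hilbert $*$-representations of $M_k(\mathbb{Q}\Gamma)$ is precisely the point the paper relies on in the discussion preceding the corollary.
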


\section{Preliminaries on Hilbert chain complexes} \label{sec:hilbertchain}

In this section we review some classical observations regarding 
Laplace operators on complexes of Hilbert spaces.
We assume having a chain complex of Hilbert spaces and bounded maps
\[ \cdots \to C_{n-1} \overset{d_{n-1}}{\longrightarrow} C_n \overset{d_{n}}{\longrightarrow} C_{n+1} \to \cdots, \]
for short, $(C_\bullet,d_\bullet)$.
We let $\partial_n=d^*_{n-1}:C_n\to C_{n-1}$ be the Hermitian dual of $d_{n-1}$.
Recall that for every $T:U\to V$, $\ker T=(\mbox{Im }T^*)^\perp$.
It follows that $\ker d_n+\ker \partial_n=C_n$.
Indeed, $(\ker \partial_n)^\perp=\overline{\mbox{Im }d_{n-1}} <\ker d_n$,
as $d_n d_{n-1}=0$.
We define 
\begin{align*}
C_n^0 &:= \ker d_{n+1} \cap \ker \partial_{n_1},\\
C_n^-&:=\ker d_{n+1} \cap (\ker \partial_{n-1})^\perp,\\
C_n^+&:=\ker \partial_{n-1} \cap (\ker d_{n+1})^\perp.
\end{align*}
and observe that these spaces form an orthogonal decomposition of $C_n$,
\[ C_n=C_n^-\oplus C_n^0 \oplus C_n^+. \]
The first two equations below are now immediate and the last two follow by taking the adjoints of the first ones and shifting the indices:
\begin{align*}
\ker d_{n} & =C_n^- \oplus C_n^0,\\
\ker \partial_n & =C_n^+ \oplus C_n^0,\\
\overline{\mbox{Im }d_n} & =C_{n+1}^-,\\
\overline{\mbox{Im }\partial_n} & =C_{n-1}^+.
\end{align*}
We deduce that the map $d_n:C_{n}\to C_{n+1}$ could be decomposed into
\[ d_n:C_n \twoheadrightarrow C_n^+ \overset{\bar{d}_n}{\longrightarrow} C_{n+1}^- \hookrightarrow C_{n+1}, \]
where $C_n \twoheadrightarrow C_n^+$ is the orthogonal projection, $C_{n+1}^- \hookrightarrow C_{n+1}$ is the inclusion 
and $\bar{d}_n:C_n^+ \to C_{n+1}^-$ is an injective transformation which has a dense image.
We also get a similar decomposition
\[ \partial_n:C_n \twoheadrightarrow C_n^- \overset{\bar{\partial}_n}{\longrightarrow} C_{n-1}^+ \hookrightarrow C_{n-1}, \]
where $\bar{\partial}_n=\bar{d}^*_{n-1}$.
Again, $\bar{\partial}_n$ is an injective transformation which has a dense image.

Next we define the following operators:
\begin{align*}
\Delta_n^+&=\partial_{n+1}d_n,\\
\Delta_n^-&=d_{n-1}\partial_n,\\
\Delta_n&=\Delta_n^-+\Delta_n^+.
\end{align*}
One checks easily that $\Delta_n^+$ and $\Delta_n^-$, hence also $\Delta_n$, are positive self adjoint operators
and that $\Delta_n^+\Delta_n^-=0$ and $\Delta_n^-\Delta_n^+=(\Delta_n^+\Delta_n^-)^*=0$.

Let us recall an obvious observation.
for $T:U\to V$, $\ker T=\ker T^*T$ and $\overline{\mbox{Im } T}=\overline{\mbox{Im }T^*T}$.
Indeed, it is obvious that $\ker T< \ker T^*T$ thus the first statement follows from the fact that for $v\in \ker T^*T$,
$\|Tv\|^2=\langle Tv,Tv \rangle=\langle v,T^*Tv \rangle=0$ while the second statement follows from the first:
\[ \overline{\mbox{Im } T}=(\ker T)^\perp=(\ker T^*T)^\perp=\overline{\mbox{Im } T^*T}. \]
We conclude the following equations.
\begin{align*} 
\ker \Delta_n^+ & =\ker d_{n}=C_n^- \oplus C_n^0,\\
\ker \Delta_n^- &=\ker  \partial_n =C_n^+ \oplus C_n^0,\\
\overline{\mbox{Im }\Delta_n^+}&=\overline{\mbox{Im }\partial_{n+1}}=C_n^+,\\
\overline{\mbox{Im }\Delta_n^-}&=\overline{\mbox{Im }d_{n-1}}=C_n^-.
\end{align*}

We now define 
\begin{align*}
\bar{\Delta}_n^+&=\bar{\partial}_{n+1}\bar{d}_n:C_n^+\to C_n^+,\\
\bar{\Delta}_n^-&=\bar{d}^{n-1}\bar{\partial}_n:C_n^-\to C_n^-,
\end{align*}
and observe that these are injective positive self adjoint operators which have dense images.
We also observe the orthogonal decomposition
\[ \Delta_n=\bar{\Delta}_n^-\oplus 0 \oplus \bar{\Delta}_n^+:C_n^-\oplus C_n^0 \oplus C_n^+ \to C_n^-\oplus C_n^0 \oplus C_n^+ \]
and conclude that
\[ \ker \Delta_n=C_n^0. \]

Associated with $(C_\bullet,d_\bullet)$ one sets, as usual, the $n$'th cohomology to be $H^n=\ker d_n/\mbox{Im } d_{n-1}$,
and the reduced $n$'th cohomology to be $\bar{H}^n=\ker d_{n}/\overline{\mbox{Im } d_{n-1}}$.
One says that the $n$'th cohomology is reduced if $H^n=\bar{H}^n$, that is if the image of $d_{n-1}$ is closed.
This condition is obviously equivalent to $H^n$ being Hausdorff in the quotient topology.

\begin{proposition} \label{prop:cohovanishing}
\begin{enumerate}
\item The reduced $n$'th cohomology $\bar{H}^n$ is isomorphic to $\ker \Delta_n$,
in particular $\bar{H}^n=0$ if and only if  $\ker \Delta_n=0$.
\item $H^n$ is reduced 
if and only if  $\bar{\Delta}_n^-$
is invertible
if and only if  there exists $\epsilon>0$ such that $\Delta_n^-(\Delta_n^--\epsilon)$ is positive.
\item $H^{n+1}$ is reduced 
if and only if  
$\bar{\Delta}_n^+$ is invertible if and only if 
there exists $\epsilon>0$ such that $\Delta_n^+(\Delta_n^+-\epsilon)$ is positive.
\item Both $H^n$ and $H^{n+1}$ are reduced if and only if  
both $\bar{\Delta}_n^-$ and $\bar{\Delta}_n^+$ are invertible
if and only if 
there exists $\epsilon>0$ such that $\Delta_n(\Delta_n-\epsilon)$ is positive.
\item $H^n=0$ and $H^{n+1}$ is reduced if and only if  $\Delta_n$ is invertible 
if and only if  there exists $\epsilon>0$ such that $\Delta_n-\epsilon 1 $ is positive.
\end{enumerate}
\end{proposition}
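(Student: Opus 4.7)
The plan is to reduce the entire proposition to three ingredients already in place: the orthogonal decomposition $C_n = C_n^- \oplus C_n^0 \oplus C_n^+$; the resulting block-diagonal form $\Delta_n = \bar\Delta_n^- \oplus 0 \oplus \bar\Delta_n^+$ (and the analogous forms for $\Delta_n^\pm$); and the spectral theorem for bounded positive self-adjoint operators. Part (1) is immediate: $\bar H^n = \ker d_n / \overline{\mbox{Im } d_{n-1}} = (C_n^- \oplus C_n^0)/C_n^- \cong C_n^0 = \ker \Delta_n$.

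For the remaining parts, I would first establish the key equivalence behind (2)--(5): \emph{$H^n$ is reduced iff $\bar\Delta_n^-$ is invertible on $C_n^-$}. The argument is: $H^n$ reduced means $\mbox{Im } d_{n-1}$ is closed; since $d_{n-1}$ factors through the bounded injection with dense image $\bar d_{n-1} : C_{n-1}^+ \to C_n^-$, this is equivalent to $\bar d_{n-1}$ being surjective, hence invertible by the open mapping theorem. Taking adjoints gives invertibility of $\bar\partial_n = \bar d_{n-1}^*$, and hence of the composition $\bar\Delta_n^- = \bar d_{n-1}\bar\partial_n$. The analogous statement for $H^{n+1}$ and $\bar\Delta_n^+$ follows by shifting indices.

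Next, I would handle the second equivalence in parts (2)--(5) by pure functional calculus: for a bounded positive self-adjoint operator $T$, $T(T - \epsilon) \geq 0$ iff $\sigma(T) \subset \{0\} \cup [\epsilon, \infty)$, and $T - \epsilon \geq 0$ iff $\sigma(T) \subset [\epsilon, \infty)$. Applied to $\Delta_n^- = \bar\Delta_n^- \oplus 0 \oplus 0$, $\Delta_n^+ = 0 \oplus 0 \oplus \bar\Delta_n^+$, and $\Delta_n = \bar\Delta_n^- \oplus 0 \oplus \bar\Delta_n^+$, this directly yields (2), (3), and (4). For (5), the hypothesis $H^n = 0$ forces $C_n^0 = 0$ (from $\bar H^n = 0$) together with $\bar\Delta_n^-$ invertible; combined with $H^{n+1}$ reduced (i.e.\ $\bar\Delta_n^+$ invertible), this makes $\Delta_n = \bar\Delta_n^- \oplus \bar\Delta_n^+$ invertible, i.e.\ $\Delta_n - \epsilon \geq 0$ for some $\epsilon > 0$; the converse is immediate.

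The only mildly delicate point will be the step ``$\Delta_n^-(\Delta_n^- - \epsilon) \geq 0 \Rightarrow \bar\Delta_n^-$ invertible'': functional calculus gives only $\sigma(\bar\Delta_n^-) \subset \{0\} \cup [\epsilon, \infty)$, so one must additionally invoke injectivity of $\bar\Delta_n^-$ (inherited from $\bar d_{n-1}$ and $\bar\partial_n$) to rule out $0$ as an isolated point of the spectrum via the associated spectral projection. Everything else is routine bookkeeping with the orthogonal decomposition.
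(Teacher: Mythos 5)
Your proposal is correct and follows essentially the same route as the paper: the same orthogonal decomposition $C_n=C_n^-\oplus C_n^0\oplus C_n^+$, the same block-diagonal forms of $\Delta_n^\pm$ and $\Delta_n$, the open mapping theorem for $\bar d_{n-1}$, and the same two spectral-theoretic facts (which the paper isolates as two preliminary ``claims,'' including exactly the injectivity point you flag for ruling out $0$ as an isolated spectral value). The only step you leave implicit is the easy converse that invertibility of $\bar\Delta_n^-=\bar d_{n-1}\bar d_{n-1}^*$ forces $\bar d_{n-1}$ to be surjective, hence $H^n$ reduced; the paper packages this as its ``second claim.''
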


Before proving the lemma we make two claims.
The first claim is that a positive self adjoint operator $S:V\to V$ is invertible if and only if  there exists $\epsilon>0$ such that the self adjoint operator $S-\epsilon$ is positive,
and in case $S$ is injective this happens
if and only if  there exists $\epsilon>0$ such that the self adjoint operator $S(S-\epsilon)$ is positive.
This claim follows easily by spectral theory.
The second claim is that
for a transformation $T:U\to V$, we have that 
$T$ is an isomorphism if and only if  $T^*$ is an isomorphism if and only if  $T^*T$ is invertible.
It is indeed clear that $T$ is an isomorphism if and only if  $T^*$ is an isomorphism and this also implies that $T^*T$ is invertible.
To see the remaining implication, assume that $T^*T$ is invertible
and deduce from the equation $\overline{\mbox{Im } T}=\overline{\mbox{Im }T^*T}$
that $T$ has a dense image, thus $T^*$ is injective. But the fact that $T^*T$ is invertible also implies that $T^*$ is surjective, thus it is an isomorphism by the open mapping theorem. This proves the claim.

\begin{proof}
To see (1) note that by the equations $\ker d_{n}=C_n^0\oplus C_n^-$
and $\overline{\mbox{Im }d_{n-1}}=C_n^-$ we have that 
$\bar{H}^n\simeq C_0$ and note that $\ker \Delta_n=C_0$.

For (2) note that 
$H^n$ is reduced if and only if  ${d}_{n-1}$ is onto $\overline{\mbox{Im }d_{n-1}}=C_n^-$ if and only if  $\bar{d}_{n-1}$ is surjective
if and only if 
$\bar{d}_{n-1}$ is an isomorphism (by the open mapping theorem) if and only if 
$\bar{\Delta}_n^-$ is invertible (by the second claim)
if and only if  there exists $\epsilon>0$ such that $\bar{\Delta}_n^-(\bar{\Delta}_n^--\epsilon)$ is positive (by the first claim)
if and only if  there exists $\epsilon>0$ such that $\Delta_n^-(\Delta_n^--\epsilon)$ is positive (as this operator decomposes on $C_n^-\oplus C_n^0 \oplus C_n^+$ as $\bar{\Delta}_n^-(\bar{\Delta}_n^--\epsilon)\oplus 0 \oplus 0$).

Similarly, for (3) we have that 
$H^{n+1}$ is reduced if and only if  $d_n$ is onto $\overline{\mbox{Im }d_{n}}=C_{n+1}^-$
if and only if  $\bar{d}_{n}$ is surjective if and only if  $\bar{d}_{n}$ is an isomorphism (by the open mapping theorem) if and only if  
$\bar{\Delta}_n^+$ is invertible (by the second claim)
if and only if  there exists $\epsilon>0$ such that $\bar{\Delta}_n^+(\bar{\Delta}_n^+-\epsilon)$ is positive (by the first claim)
if and only if  there exists $\epsilon>0$ such that $\Delta_n^+(\Delta_n^+-\epsilon)$ is positive
(as this operator decomposes on $C_n^-\oplus C_n^0 \oplus C_n^+$ as $0 \oplus 0 \oplus \bar{\Delta}_n^+(\bar{\Delta}_n^+-\epsilon)$).

(4) follows from (2) and (3) given that 
$\Delta_n^+\Delta_n^-=\Delta_n^-\Delta_n^+=0$.

To see (5) note that by the first claim
$\Delta_n$ is invertible 
if and only if  there exists $\epsilon>0$ such that $\Delta_n-\epsilon$ is positive.
Assume that $\Delta_n$ is invertible.
Then in particular $C_0=\ker \Delta_n=0$, thus $C_n= C_n^-\oplus C_n^+$ and $\Delta_n$ decomposes as $\bar{\Delta}_n^-\oplus \bar{\Delta}_n^+$. As $\Delta_n$ is invertible, it follows that both  $\bar{\Delta}_n^-$ and $\bar{\Delta}_n^+$
are invertible. By (4) we have that both $H^n$ and $H^{n+1}$
are reduced and we conclude that $H^n=\bar{H}^n=0$ by (1).
Thus indeed, $H^n=0$ and $H^{n+1}$
is reduced.
Assume now that $H^n=0$ and $H^{n+1}$ is reduced.
Then
by (1), $C_n=C_n^-\oplus C_n^+$
and $\Delta_n$ decomposes as  $\bar{\Delta}_n^-\oplus \bar{\Delta}_n^+$.
Since both $H^n$ and $H^{n+1}$ are reduced,
by (4)
we have that both $\bar{\Delta}_n^-$
and $\bar{\Delta}_n^+$ are invertible
and we conclude that $\Delta_n$ is invertible.
\end{proof}

\begin{remark} \label{rem:epsilonrational}
Note that in Proposition~\ref{prop:cohovanishing}, by the positivity of the operators $\Delta^-$ in (2),
$\Delta^+$ in (3),
$\Delta$ in (4)
and $1$ in (5),
the parameter $\epsilon$ in each case could be chosen to be a positive rational.
\end{remark}


\section{The main Theorem} \label{sec:main}

\subsection{Proof of Proposition~\ref{prop:D_n}} \label{subsec:D_n}

Let $\Gamma$ be a group acting by automorphisms on a simplicial complex $X$.
We denote by $X^{(n)}$ the $n$'th skeleton of $X$, that is the set of $n$-dimensional oriented (that is, ordered) simplices in $X$.
Let $V$ be a $\mathbb{Q}\Gamma$-module.
We denote by $C_n=C_n(V)$ the vector space of alternating $\Gamma$-equivariant maps from $X^{(n)}$ to $V$.
For $t=0,\ldots,n+1$, we let $f_n^t:C_n\to C_{n+1}$ be the $t$'th face map,
that is for $\phi\in C_{n+1}$ and $\sigma=(\sigma_0,\ldots,\sigma_{n+1})\in X^{(n+1)}$, the evaluation of $f_n^t(\phi)$ on $\sigma$
is given by $\phi(\sigma_0,\ldots,\hat{\sigma}_t,\ldots,\sigma_{n+1})$,
and we let 
\[ d_n=\sum_{t=0}^n (-1)^t f_n^t:C_n\to C_{n+1} \]
be the standard boundary map.
The following is a well known fact.

\begin{theorem} \label{thm:groupcoho}
If $X$ is contractible and the $\Gamma$-stabilizers are finite then the chain complex $(C_\bullet,d_\bullet)$ computes the cohomology of $\Gamma$ with coefficients in $V$,
that is $H^n(C_\bullet,d_\bullet)\simeq H^n(\Gamma,V)$.
\end{theorem}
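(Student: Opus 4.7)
The plan is to exhibit $(C_\bullet, d_\bullet)$ as $\operatorname{Hom}_{\mathbb{Q}\Gamma}(P_\bullet, V)$ for a projective resolution $P_\bullet \to \mathbb{Q}$ of the trivial $\mathbb{Q}\Gamma$-module arising from the simplicial structure of $X$. Granting this identification, the theorem will follow immediately from the standard definition $H^n(\Gamma,V) = \operatorname{Ext}^n_{\mathbb{Q}\Gamma}(\mathbb{Q}, V)$.

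First I would let $P_n$ be the quotient of the free $\mathbb{Q}$-vector space on $X^{(n)}$ by the antisymmetrization relations $[\sigma_{\pi(0)},\ldots,\sigma_{\pi(n)}] = \sign(\pi)\,[\sigma_0,\ldots,\sigma_n]$, equipped with the boundary $\partial_{n+1} = \sum_{t=0}^{n+1}(-1)^t f^t$ dual to $d_n$, and with augmentation $P_0 \to \mathbb{Q}$ sending each vertex to $1$. Contractibility of $X$ ensures this augmented complex is exact in positive degrees, since it computes $H_*(X;\mathbb{Q}) = \mathbb{Q}$, using that the alternating and the ordered simplicial chain complexes are quasi-isomorphic in characteristic zero. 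For projectivity I would decompose $P_n$ along $\Gamma$-orbits: each orbit of an unordered $n$-simplex $\sigma$ contributes a summand isomorphic to $\mathbb{Q}\Gamma \otimes_{\mathbb{Q}\Gamma_\sigma} \mathbb{Q}_\varepsilon$, where $\Gamma_\sigma$ is the (finite) stabilizer of $\sigma$ and $\mathbb{Q}_\varepsilon$ is the one-dimensional character recording the sign of how $\Gamma_\sigma$ permutes the vertices of $\sigma$. Since $\Gamma_\sigma$ is finite and we are in characteristic zero, Maschke's theorem makes $\mathbb{Q}_\varepsilon$ a projective $\mathbb{Q}\Gamma_\sigma$-module, and induction preserves projectivity, so each $P_n$ is $\mathbb{Q}\Gamma$-projective.

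Finally I would identify $\operatorname{Hom}_{\mathbb{Q}\Gamma}(P_n, V)$ with $C_n$ by unpacking the definitions: a $\mathbb{Q}\Gamma$-linear map out of $P_n$ is precisely a $\Gamma$-equivariant function $X^{(n)} \to V$ that vanishes on the antisymmetrization relations, that is, exactly an element of $C_n$; and the coboundary induced by $\partial_{n+1}$ is $d_n$ by construction. The point requiring most care is the bookkeeping for simplices whose stabilizer contains vertex-permutations of odd sign, since this is precisely where the character $\mathbb{Q}_\varepsilon$ enters and where the alternating condition interacts nontrivially with $\Gamma$-equivariance; this is also the only place where characteristic zero is essential, and it is the step I would write out most carefully.
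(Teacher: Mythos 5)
Your proposal is correct, but there is nothing in the paper to compare it against: the authors state Theorem~\ref{thm:groupcoho} as ``a well known fact'' and give no proof, so you are in effect supplying the standard argument they omit. Your route is the classical one (cf.\ Brown's \emph{Cohomology of Groups}): the oriented simplicial chain complex of the contractible complex $X$ is an acyclic augmented complex of $\mathbb{Q}\Gamma$-modules, each chain module splits over $\Gamma$-orbits into summands induced from the orientation character $\mathbb{Q}_\varepsilon$ of the finite simplex stabilizer, Maschke in characteristic zero makes these projective, and $\operatorname{Hom}_{\mathbb{Q}\Gamma}(P_\bullet,V)$ is identified with the alternating equivariant cochains $(C_\bullet,d_\bullet)$. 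All the steps are sound, and you correctly isolate the one genuinely delicate point, namely orbits whose stabilizer reverses orientation: there the summand $\operatorname{Hom}_{\mathbb{Q}\Gamma}\bigl(\mathbb{Q}\Gamma\otimes_{\mathbb{Q}\Gamma_\sigma}\mathbb{Q}_\varepsilon,V\bigr)\cong\operatorname{Hom}_{\mathbb{Q}\Gamma_\sigma}(\mathbb{Q}_\varepsilon,V)$ matches exactly the constraint that an alternating equivariant cochain must take its value at $\sigma$ in the $\varepsilon$-isotypic part of $V$ under $\Gamma_\sigma$. (Two cosmetic remarks: the identification of the oriented with the ordered simplicial complex does not actually need characteristic zero --- only the Maschke step does; and you should note that $\operatorname{Ext}^n_{\mathbb{Q}\Gamma}(\mathbb{Q},V)\cong\operatorname{Ext}^n_{\mathbb{Z}\Gamma}(\mathbb{Z},V)$ for $V$ a $\mathbb{Q}$-vector space, which is immediate by flat base change and is the convention the paper uses anyway.)
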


The chain complex $(C_\bullet,d_\bullet)$ is called the equivariant chain complex associated with the action of $\Gamma$ on $X$.
Upon choosing a fundamental domain, one can introduce a non-equivariant, isomorphic chain complex.
This is what we describe next.
For every $n$ we let $Y^{(n)}\subset X^{(n)}$ be a fixed fundamental domain for the $\Gamma$-action on $X^{(n)}$
and we let $\bar{C}_n$ be the vector space of all maps from $Y^{(n)}$ to $V$.
The restriction map $C_n \to \bar{C}_n$ is a linear isomorphism.
Conjugating the boundary maps with these isomorphisms, we get a new, but isomorphic, chain complex
\[ \cdots \to \bar{C}_{n-1} \overset{\bar{d}_{n-1}}{\longrightarrow} \bar{C}_n \overset{\bar{d}_{n}}{\longrightarrow} \bar{C}_{n+1} \to \cdots \]
In particular, for every $n$, $H^n(\bar{C}_\bullet,\bar{d}_\bullet)\simeq H^n(C_\bullet,d_\bullet)$.

We will describe more explicitly the boundary maps $\bar{d}_n$ appearing above.
For $\phi\in C_{n}$ and $\sigma=(\sigma_0,\ldots,\sigma_{n+1})\in Y^{(n+1)}$,
finding $\tau\in Y^{(n)}$ and $g\in \Gamma$ such that $g\tau$ equals the $t$'th face of $\sigma$, namely $(\sigma_0,\ldots,\hat{\sigma}_t,\ldots,\sigma_{n+1})$, we have
\[ f_n^t(\phi)(\sigma)=\phi(g\tau)=g\phi(\tau). \]
We thus define $\bar{f}_n^t:\bar{C}_{n}\to \bar{C}_{n+1}$ by
\begin{equation} \label{eq:sigmatau}
\bar{f}_n^t(\bar{\phi})(\sigma)=g\bar{\phi}(\tau),
\end{equation}
for $\bar{\phi}\in \bar{C}_n$ and $\sigma$, $\tau$ and $g$ as above.
We thus get the explicit description $\bar{d}_n=\sum_{t=0}^n (-1)^t \bar{f}_n^t$.

We now fix a natural $N$ and assume that the for every $n\leq N+1$, $X^{(n)}$ has finitely many $\Gamma$-orbits,
equivalently $Y^{(n)}$ is finite.
We set $k_n=|Y^{(n)}|$.
We enumerate the elements of the sets $Y^{(n)}$, thus identify $\bar{C}_n$ with $V^{k_n}$.
Under these identifications, the map $\bar{f}_n^t$ is represented by an $k_n\times k_{n+1}$-matrix with coefficients in $\mathbb{Q}\Gamma$ 
as follows:
if $\sigma\in Y^{(n+1)}$ and $\tau\in Y^{(n)}$ are enumerated $i$ and $j$ correspondingly, then the entry of this matrix at $(i,j)$ will be $g$
if $g\tau$ is the $t$'th face of $\sigma$, as in equation~(\ref{eq:sigmatau}), and will be 0 otherwise.
Accordingly, $\bar{d}_n=\sum_{t=0}^n (-1)^t \bar{f}_n^t$ is represented by a matrix $D_n\in M_{k_n\times k_{n+1}}(\mathbb{Q}\Gamma)$.
Note that the matrices $D_n\in M_{k_n\times k_{n+1}}(\mathbb{Q}\Gamma)$, which do depend on the action of $\Gamma$ on $X$
and the choice of fundamental domains $Y^{(n)}$, are independent of the representation $V$.
Also check that the relation $D_nD_{n-1}=0\in M_{k_{n-1}\times k_{n+1}}(\mathbb{Q}\Gamma)$ holds for every $n\leq N$.
It follows that the cohomology groups $H^n(\bar{C}_\bullet,\bar{d}_\bullet)$ are isomorphic to the cohomology groups of the complex
\[ \cdots \to V^{k_{n-1}} \overset{D_{n-1}}{\longrightarrow} V^{k_n} \overset{D_{n}}{\longrightarrow} V^{k_{n+1}} \to \cdots \]
In view of the isomorphism $H^n(\bar{C}_\bullet,\bar{d}_\bullet)\simeq H^n(C_\bullet,d_\bullet)$,
Proposition~\ref{prop:D_n} now follows from Theorem~\ref{thm:groupcoho}.

\subsection{Proof of The Main Theorem} \label{subsec:proof}

We will show that both (1) and (2) in The Main Theorem are equivalent to

\begin{enumerate}
\setcounter{enumi}{2}
\item
For every unitary representation $\rho$ of $\Gamma$, the image of $\Delta_n$ under $M_k(\rho)$ is invertible.
\end{enumerate}

We note that by Lemma~\ref{lem:squarematrices} we have 
$D_n^*D_n,~D_{n-1}D_{n-1}^* \in M_{k_n}(\mathbb{Q}\Gamma)_+$, hence also 
$\Delta_n=D_n^*D_n+D_{n-1}D_{n-1}^* \in M_{k_n}(\mathbb{Q}\Gamma)_+$.
Thus (3) is equivalent to (2) by Corollary~\ref{cor:unitaryinvcriterion}.
By Proposition~\ref{prop:D_n}, for every unitary representation $\rho$ of $\Gamma$ on a Hilbert space $V$, the cohomology groups $H^n(\Gamma,\rho)$ are isomorphic to the cohomology groups of the complex
\[ \cdots \to V^{k_{n-1}} \overset{D_{n-1}}{\longrightarrow} V^{i_n} \overset{D_{n}}{\longrightarrow} V^{k_{n+1}} \to \cdots \]
Thus (3) is also equivalent to (1) by Proposition~\ref{prop:cohovanishing}(5).
We conclude that indeed (1) is equivalent to (2).


\subsection{Some further remarks} \label{subsec:remarks}

Property $(T)$ has several different characterizations.
In the introduction we mentioned one of them:
a finitely generated group $\Gamma$ has property $(T)$ if and only if 
$H^1(\Gamma,\rho)=0$ for every unitary representation $\rho$.
In fact, in \cite{ozawa} Ozawa used another, equivalent, characterization:
a finitely generated group $\Gamma$ has property $(T)$ if and only if 
the cohomology $H^1(\Gamma,\rho)$ is \emph{reduced} for every unitary representation $\rho$.
In the course of his proof he used Proposition~\ref{prop:cohovanishing}(3) above in the special case $n=0$
to establish that, in a certain setting involving $\Delta_0$, all first cohomology groups are indeed reduced.
This is particularly desirable, as in this setting $\Delta_0$ is the classical Laplacian element in the group algebra $\mathbb{Q}\Gamma$,
rather then an element in a matrix group over it, as appears in our considerations here for $\Delta_n$, $n>0$.

When trying to generalize Ozawa's work, one should take notice of the fact that in higher degrees, the following two properties of a group $\Gamma$
are not equivalent in general.

\begin{enumerate}
\item $H^n(\Gamma,\rho)=0$ for every unitary representation $\rho$.
\item $H^n(\Gamma,\rho)$ is reduced for every unitary representation $\rho$.
\end{enumerate}

The reader should be aware that 
property (1) was taken as the definition of $n$-Kazhdan group in \cite[Definition~4.1]{chifre-etal} while
property (2) was taken as the definition of $(T_{n-1})$ group in \cite[Definition~30]{bader-nowak}.
Clearly, property (1) implies property (2).
As remarked before, these properties are equivalent for $n=1$.
The fact that these properties are not equivalent in general for $n\geq 2$ is illustrated by the following proposition. 

\begin{proposition}[Dymara-Januszkiewicz] \label{A2}
Fix an integer $n\geq 2$ and a sufficiently large prime $p$.
Let $\Gamma$ be a lattice in $PGL_{n+1}(\mathbb{Q}_p)$.
Then $\Gamma$ satisfies property (2) but it does not satisfy property (1).
\end{proposition}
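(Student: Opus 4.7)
The plan is to realize $\Gamma$ as acting on the Bruhat--Tits building $X$ of type $\tilde A_n$ associated with $PGL_{n+1}(\mathbb{Q}_p)$, which is an $n$-dimensional, contractible simplicial complex on which $\Gamma$ acts properly, cocompactly (choosing $\Gamma$ cocompact if necessary), and with finite stabilizers. This fits the framework of Proposition~\ref{prop:D_n}, producing an equivariant cochain complex that terminates in degree $n$. Reducedness of $H^n(\Gamma,\rho)$ then amounts exactly to closedness of $\mathrm{Im}\,D_{n-1}$ in $V^{k_n}$, while refuting property (1) amounts to exhibiting a single unitary $\rho$ with $H^n(\Gamma,\rho)\neq 0$.

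For property (2), my strategy is to invoke the Garland-type spectral method in the $p$-adic setting as developed by Ballmann--Swiatkowski \cite{Ballmann-Swiatkowski} and Dymara--Januszkiewicz \cite{dymara-janushkevich}. Each link of a vertex of $X$ is a finite spherical building of type $A_n$ of parameter $q=p$, whose normalized Laplacian has a quantitative spectral gap $\kappa(q)$ tending to $1$ as $q\to\infty$. For $p$ sufficiently large, a local-to-global argument promotes this link-level gap to a strictly positive lower bound on $\bar\Delta_n^-$ that is uniform in the unitary coefficients $\rho$. By Proposition~\ref{prop:cohovanishing}(2) this is precisely reducedness of $H^n(\Gamma,\rho)$ for every unitary representation $\rho$, establishing property (2).

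For the failure of property (1), I would exhibit an appropriate unitary coefficient by induction from a torsion-free sublattice. By Selberg's lemma, $\Gamma$ contains a torsion-free cocompact sublattice $\Gamma'$ of finite index; then $X/\Gamma'$ is a compact $n$-dimensional $K(\Gamma',1)$, and the Bruhat--Tits / Borel--Serre theory identifies $\Gamma'$ as an orientable Poincar\'e duality group of dimension $n$, so that $H^n(\Gamma',\mathbb{C})\neq 0$. Shapiro's lemma applied to the finite-dimensional (hence unitary) induced representation $\rho=\mathrm{Ind}_{\Gamma'}^{\Gamma}\mathbb{C}$ then yields $H^n(\Gamma,\rho)\cong H^n(\Gamma',\mathbb{C})\neq 0$, refuting property (1).

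The main obstacle is the first step: upgrading the known spectral gaps of the finite $A_n$-links into a strictly positive lower bound on $\bar\Delta_n^-$ that is uniform in the unitary coefficient $\rho$, so that Proposition~\ref{prop:cohovanishing}(2) becomes directly applicable. This is the analytic core of Dymara--Januszkiewicz's method and is where the hypothesis ``$p$ sufficiently large'' is consumed; matching this estimate cleanly to the algebraic matrix framework of Proposition~\ref{prop:D_n} is the substantive work.
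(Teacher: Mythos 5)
There is a genuine gap in your refutation of property (1). The quotient $X/\Gamma'$ is a compact $n$-dimensional aspherical complex, but it is \emph{not} a manifold, and $\Gamma'$ is \emph{not} a Poincar\'e duality group: by Borel--Serre, a torsion-free cocompact lattice in a semisimple $p$-adic group of rank $n$ is a duality group of dimension $n$ whose dualizing module is the (infinitely generated) Steinberg module $H^n_c(X)$, not $\mathbb{Z}$. (Already for $n=1$ one sees the problem: lattices in $PGL_2(\mathbb{Q}_p)$ are virtually free.) Consequently $H^n(\Gamma',\mathbb{C})\cong (\mathrm{St}\otimes\mathbb{C})_{\Gamma'}$, and the nonvanishing of these coinvariants is not automatic --- the top cohomology of a compact $n$-complex can perfectly well vanish. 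The paper avoids this by an Euler characteristic argument: using Dymara--Januszkiewicz's Theorem B together with Shapiro's lemma one gets $H^i(\Gamma',\mathbb{C})=0$ for $0<i<n$ (and for $i>n$ by dimension), so if $H^n(\Gamma',\mathbb{C})$ also vanished then $\chi(\Gamma')=1$ for \emph{every} torsion-free finite-index subgroup $\Gamma'$, contradicting the fact that $\chi$ is multiplicative in the index (and that such subgroups of arbitrarily large index exist by residual finiteness). You would need to replace your duality step by this, or by some other genuine input.

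For property (2) your plan points at the right circle of ideas but leaves the essential step undone, and attributes it to the wrong mechanism. The Ballmann--\'Swi\k{a}tkowski/Garland link conditions yield vanishing of cohomology in degrees strictly \emph{below} the top; they do not by themselves produce the lower bound on $\bar\Delta_n^-$ in the top degree $n$ that Proposition~\ref{prop:cohovanishing}(2) requires, uniformly over all unitary $\rho$. That top-degree statement is precisely the content of Dymara--Januszkiewicz's Theorem D (for buildings of sufficiently large thickness, i.e.\ $(X,G)$ in their class $\mathcal{B}+$, which is where the hypothesis on $p$ is used), and the paper simply cites it for the ambient group $G=PGL_{n+1}(\mathbb{Q}_p)$ and then descends to the lattice via Blanc's topological Shapiro lemma, rather than running Garland directly on the lattice action. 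Finally, a small point: you cannot ``choose $\Gamma$ cocompact if necessary,'' since the statement is about an arbitrary lattice; fortunately all lattices in semisimple $p$-adic groups are cocompact, but this should be said rather than arranged.
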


We stress that Proposition~\ref{prop:cohovanishing}
gives a natural setting for establishing property (2) for various groups.
Indeed, Proposition~\ref{A2} is supported by a computer assisted proof provided to our request by Marek Kaluba for some specific instances of groups
$\Gamma$, using Proposition~\ref{prop:cohovanishing}(2).
These instances are the four groups acting simply transitively on the set of chambers of an $\tilde{A}_2$ building of thickness 3 
constructed by Tits and Ronan in \cite[\S3.1]{tits} and \cite[Theorem~2.5]{ronan}.

We note however that Proposition~\ref{prop:cohovanishing} does not give a checkable ``if and only if'' criterion for property (2).
In contrast, the work \cite{ozawa} did provide such a criterion for $n=1$.
There, following \cite{netzer-thom}, a use was made of the fact that $\Delta=\Delta_0$ is an order unit
in the augmantation ideal of the group algebra.
We are not aware of an analogue of this fact in higher degrees.

We will end this paper by proving Proposition~\ref{A2}.
This result is essentially due to Dymara and Januszkiewicz, taking into account the topological Shapiro's Lemma \cite[Theorem~8.7]{blanc}.
We are grateful to Jan Dymara for discussing it with us.

\begin{proof}
We will use \cite{dymara-janushkevich}.
We set $G=PGL_{n+1}(\mathbb{Q}_p)$ and let $X$ be its Bruat-Tits building.
The assumption on the prime $p$ is so that the thickness of $X$ is sufficiently large 
in the sense of \cite[Theorem~A]{dymara-janushkevich} so that $(X,G)$ is in the class $\mathcal{B}+$.
By \cite[Theorem~D]{dymara-janushkevich} we have that $G$ satisfies property (2),
thus it follows by the topological Shapiro's Lemma \cite[Theorem~8.7]{blanc} that also $\Gamma$ satisfies property (2).

We are left to show that $\Gamma$ does not satisfy property (1).
By Shapiro's lemma, it is enough to show that some finite index subgroup of $\Gamma$
does not satisfy property (1). We will argue to show that in fact, for some finite index subgroup $\Gamma'<\Gamma$, 
$H^n(\Gamma',\mathbb{C})\neq 0$.
We assume this is not the case, that is $H^n(\Gamma',\mathbb{C})= 0$ for every finite index subgroup $\Gamma'<\Gamma$,
and argue by contradiction.
Note that for such a finite index subgroup $\Gamma'$, we have $\dim(H^0(\Gamma',\mathbb{C}))=1$
while $H^i(\Gamma',\mathbb{C})=0$ for every $i>0$.
For $0<i<n$ this follows by Shapiro's Lemma
\cite[Theorem~8.7]{blanc} and \cite[Theorem~B]{dymara-janushkevich}, for $i=n$ this follows by our negation assumption and for 
$i>n$ this follows by Theorem~\ref{thm:groupcoho}, as $X$ is a contractible $n$-dimensional simplicial complex.
If $\Gamma'$ is torsion free, it follows that its Euler characteristic equals 1, as it is given by an alternating sum of Betti numbers. 
As $\Gamma$ is a finitely generated linear group, thus residually finite, it has torsion free subgroups of arbitrary large finite index.
As the Euler characteristic is proportional to the index, we get a contradiction.
Thus indeed $\Gamma$ does not satisfy property (1).
\end{proof}

\begin{remark}\normalfont
In fact, the assumption that $p$ is sufficiently large in Proposition~\ref{A2} is superfluous, and the proposition is valid for every prime $p$.
That $\Gamma$ does not satisfy property (1) follows by \cite{oppenheim-gr}.
Also that $\Gamma$ satisfies property (2) follows from by the same method of \cite{oppenheim-gr}, however this is not currently written in a way which is easy to cite.
We note that by removing the assumption on the thickness, one could obtain an analogue of Proposition~\ref{A2} which is valid for every simple group of 
higher rank over a non-archimedean local field.
\end{remark}


\begin{bibdiv}
\begin{biblist}

\bib{bader-nowak}{article}{
   author={Bader, U.},
   author={Nowak, P. W.},
   title={Cohomology of deformations},
   journal={J. Topol. Anal.},
   volume={7},
   date={2015},
   number={1},
   pages={81--104},
}

\bib{Ballmann-Swiatkowski}{article}{
   author={Ballmann, W.},
   author={\'{S}wi\polhk atkowski, J.},
   title={On {$L^2$}-cohomology and property ({T}) for automorphism
              groups of polyhedral cell complexes},
   JOURNAL = {Geom. Funct. Anal.},
    VOLUME = {7},
      YEAR = {1997},
    NUMBER = {4},
     PAGES = {615--645},
}

\bib{blanc}{article}{
   author={Blanc, P.},
     TITLE = {Sur la cohomologie continue des groupes localement compacts},
   JOURNAL = {Ann. Sci. \'{E}cole Norm. Sup. (4)},
    VOLUME = {12},
      YEAR = {1979},
    NUMBER = {2},
     PAGES = {137--168},
}

\bib{chifre-etal}{article}{
   author={De Chifre, M.},
   author={Glebsky, L.},
   author={Lubotzky, A.},
   author={Thom, A.},
   title={Stability, cohomology vanishing, and non-approximable groups},
   journal={arXiv:1711.10238 [math.GR]},
}

\bib{cimpric}{article}{
   AUTHOR = {Cimpri\v{c}, J.},
     TITLE = {A representation theorem for {A}rchimedean quadratic modules
              on {$*$}-rings},
   JOURNAL = {Canad. Math. Bull.},
  FJOURNAL = {Canadian Mathematical Bulletin. Bulletin Canadien de
              Math\'{e}matiques},
    VOLUME = {52},
      YEAR = {2009},
    NUMBER = {1},
     PAGES = {39--52},
}

\bib{dymara-janushkevich}{article}{
   author={Dymara, J.},
    AUTHOR = {Januszkiewicz, T.},
     TITLE = {Cohomology of buildings and their automorphism groups},
   JOURNAL = {Invent. Math.},
  FJOURNAL = {Inventiones Mathematicae},
    VOLUME = {150},
      YEAR = {2002},
    NUMBER = {3},
     PAGES = {579--627},
 }

\bib{fujiwara-kabaya}{article}{
   author={Fujiwara, K.},
   author={Kabaya, Y.},
   title={Computing Kazhdan Constants by Semidefinite Programming},
   journal={Exp. Math.},
   volume={28},
   date={2019},
   number={3},
   pages={301--312},
 }

\bib{Garland}{article}{
    AUTHOR = {Garland, H.},
     TITLE = {{$p$}-adic curvature and the cohomology of discrete subgroups
              of {$p$}-adic groups},
   JOURNAL = {Ann. of Math. (2)},
    VOLUME = {97},
      YEAR = {1973},
     PAGES = {375--423},
}

\bib{Ingelstam}{article}{
   AUTHOR = {Ingelstam, L.},
     TITLE = {Real {B}anach algebras},
   JOURNAL = {Ark. Mat.},
  FJOURNAL = {Arkiv f\"{o}r Matematik},
    VOLUME = {5},
      YEAR = {1964},
     PAGES = {239--270 (1964)},
}

\bib{kaluba-nowak}{article}{
   author={Kaluba, M.},
   author={Nowak, P. W.},
   title={Certifying numerical estimates of spectral gaps},
   journal={Groups Complex. Cryptol.},
   volume={10},
   date={2018},
   number={1},
   pages={33--41},
   }

\bib{kaluba-nowak-ozawa}{article}{
   author={Kaluba, M.},
   author={Nowak, P. W.},
   author={Ozawa, N.},
   title={${\rm Aut}(\Bbb F_5)$ has property $(T)$},
   journal={Math. Ann.},
   volume={375},
   date={2019},
   number={3-4},
   pages={1169--1191},

}

\bib{kaluba-kielak-nowak}{article}{
   author={Kaluba, M.},
   author={Kielak, D.},
   author={Nowak, P. W.},
   title={On property $(T)$ for $\operatorname{Aut}(F_n)$ and $\operatorname{SL}_n(\mathbb{Z})$},
   journal={arXiv:1812.03456},
   date={2018},
}

\bib{lubotzky}{article}{
   author={Lubotzky, Alexander},
   title={High dimensional expanders},
   conference={
      title={Proceedings of the International Congress of
      Mathematicians---Rio de Janeiro 2018. Vol. I. Plenary lectures},
   },
   book={
      publisher={World Sci. Publ., Hackensack, NJ},
   },
   date={2018},
   pages={705--730},
}

\bib{netzer-thom}{article}{
   author={Netzer, T.},
   author={Thom, A.},
   title={Kazhdan's property (T) via semidefinite optimization},
   journal={Exp. Math.},
   volume={24},
   date={2015},
   number={3},
   pages={371--374},
   issn={1058-6458},
   review={\MR{3359223}},
   doi={10.1080/10586458.2014.999149},
}

\bib{oppenheim1}{article}{
   AUTHOR = {Oppenheim, I.},
     TITLE = {Vanishing of cohomology and property ({T}) for groups acting
              on weighted simplicial complexes},
   JOURNAL = {Groups Geom. Dyn.},
    VOLUME = {9},
      YEAR = {2015},
    NUMBER = {1},
     PAGES = {67--101},
}

\bib{oppenheim2}{article}{
   AUTHOR = {Oppenheim, I.},
     TITLE = {Property ({T}) for groups acting on simplicial complexes
              through taking an ``average'' of {L}aplacian eigenvalues},
   JOURNAL = {Groups Geom. Dyn.},
    VOLUME = {9},
      YEAR = {2015},
    NUMBER = {4},
     PAGES = {1131--1152},
}

\bib{oppenheim-gr}{article}{
   AUTHOR = {Oppenheim, I.},
AUTHOR = {Grinbaum-Reizis, Z.},
     TITLE = {Curvature criterion for vanishing of group cohomology},
   JOURNAL = {arXiv:1912.05839 [math.GR]},
      YEAR = {2019},
}

\bib{ozawa}{article}{
   author={Ozawa, N.},
   title={Noncommutative real algebraic geometry of Kazhdan's property (T)},
   journal={J. Inst. Math. Jussieu},
   volume={15},
   date={2016},
   number={1},
   pages={85--90},
}

\bib{Palmer}{article}{
    AUTHOR = {Palmer, T. W.},
     TITLE = {Real {$C\sp*$}-algebras},
   JOURNAL = {Pacific J. Math.},
  FJOURNAL = {Pacific Journal of Mathematics},
    VOLUME = {35},
      YEAR = {1970},
     PAGES = {195--204},
}

\bib{ronan}{article}{
    AUTHOR = {Ronan, M. A.},
     TITLE = {Triangle geometries},
   JOURNAL = {J. Combin. Theory Ser. A},
  FJOURNAL = {Journal of Combinatorial Theory. Series A},
    VOLUME = {37},
      YEAR = {1984},
    NUMBER = {3},
     PAGES = {294--319},
    }

\bib{schmudgen}{article}{
   author={Schm\"{u}dgen, K.},
   title={Noncommutative real algebraic geometry---some basic concepts and
   first ideas},
   conference={
      title={Emerging applications of algebraic geometry},
   },
   book={
      series={IMA Vol. Math. Appl.},
      volume={149},
      publisher={Springer, New York},
   },
   date={2009},
   pages={325--350},
}

\bib{tits}{article}{
   AUTHOR = {Tits, Jacques},
     TITLE = {Buildings and group amalgamations},
 BOOKTITLE = {Proceedings of groups---{S}t.\ {A}ndrews 1985},
    SERIES = {London Math. Soc. Lecture Note Ser.},
    VOLUME = {121},
     PAGES = {110--127},
 PUBLISHER = {Cambridge Univ. Press, Cambridge},
      YEAR = {1986},
 }

\end{biblist}
\end{bibdiv}

\end{document}